\documentclass{amsart}

\usepackage{graphicx} 
\usepackage{amsmath}
\usepackage{tikz}
\usetikzlibrary{arrows.meta, calc, positioning, shadows, shapes.geometric}
\usepackage{amsmath, amssymb, amsthm}
\usetikzlibrary{arrows.meta}
\usetikzlibrary{decorations.markings}
\usepackage{caption}
\usepackage[utf8]{inputenc}
\usepackage{blindtext}
\usepackage{amssymb}
\usepackage{amsthm}
\usepackage[dvipsnames]{xcolor}
\usepackage{comment}
\usepackage{xcolor}
\usepackage{pgfkeys}
\usepackage{pgfplots}
\usepackage{tikz}
\usepackage[colorlinks=true,
            linkcolor=blue,
            citecolor=blue,
            urlcolor=blue]{hyperref}
\usetikzlibrary{arrows.meta, calc, decorations.pathreplacing, shapes.geometric}
\usetikzlibrary{arrows.meta,decorations.markings,angles,quotes}
\pgfplotsset{compat=1.18}
\newtheorem*{acknowledgement}{Acknowledgement}
\newtheorem{theorem}{Theorem}[section]

\newtheorem{lemma}[theorem]{Lemma}

\newtheorem{corollary}[theorem]{Corollary}
\newtheorem{proposition}[theorem]{Proposition}

\newtheorem{remark}[theorem]{Remark}

\title[Article Title]{Subgradient Methods on Manifolds with Lower Bounded Curvature}

\author{G. C. Bento$^\ast$}

\author{J. X. Cruz Neto}

\author{J. O. Lopes}

\author{I. D. L. Melo}
\address[G. C. Bento]{Institute of Mathematics and Statistics, IME, Federal University of Goi\'as, Goi\^ania, Goi\'as, Brazil}
\email{glaydston@ufg.br}

\address[J. X. Cruz Neto]{Departamento de Matem\'{a}tica, CCN, Federal University of Piau\'{\i}, Te\-re\-si\-na, Piau\'{\i}, Brazil.}
\email{jxavier@ufpi.edu.br}

\address[J. O. Lopes]{Departamento de Matem\'{a}tica, CCN, Federal University of Piau\'{\i}  Te\-re\-si\-na, Piau\'{\i}, Brazil.}
\email{jurandir@ufpi.edu.br}

\address[I. D. L. Melo]{Departamento de Matem\'{a}tica, CCN, Federal University of Piau\'{\i}  Te\-re\-si\-na, Piau\'{\i}, Brazil.}
\email{italodowell@ufpi.edu.br}

\keywords{Subgradient methods, Riemannian optimization, Hadamard manifolds, lower bounded curvature, convex optimization}

\subjclass[2020]{Primary 49M37; Secondary 90C25, 65K05, 53C23}

\subjclass[2020]{49M37, 90C25, 53C23, 49J52}

\thanks{$^\ast$ Corresponding author.}

\begin{document}

\begin{abstract}
 The subgradient method is a classical and foundational approach in non-smooth convex optimization, holds enduring importance due to its simplicity, robustness, and pivotal role as a conceptual and algorithmic point of departure have made it the backbone of many significant optimization algorithms.  Motivated by classical Euclidean results and recent advances in first-order Riemannian optimization, we study the convergence of the subgradient method on Hadamard manifolds with lower bounded curvature. Assuming a nonempty solution set and employing a corresponding non-summable diminishing step-size condition, we establish the convergence of the generated sequence $\{x^k\}$ to a minimizer whenever at least one of the following holds: (a) the sequence $\{x^k\}$ is bounded; (b) the solution set $S$ is bounded; or (c) the step-sizes are square-summable ($\sum_{k=1}^{\infty}\lambda_k^2<\infty$). Additionally, we prove that if $\operatorname{int}(S)\neq\emptyset$, the method achieves finite termination. Our main contribution provides a Riemannian counterpart to Shepilov's Euclidean analysis [Cybernetics, 12 (1976), pp. 544–548], thus complementing existing literature on convex minimization over manifolds with lower bounded curvature.
\end{abstract}

\maketitle
\section{Introduction}
The subgradient method is one of the classical algorithms for a first-order
solution of nondifferentiable convex optimization problems. For a convex function $f:\mathbb{R}^n \to \mathbb{R}$, one may select at iteration $k$ a subgradient $g^k \in \partial f(x^k)$ and perform the update
\begin{equation}\label{eq:subgradient}
x^{k+1} = x^k - \alpha_k g^k, \qquad k=0,1,2,\ldots,
\end{equation}
with a suitable step-size sequence $\{\alpha_k\}$. This classical Euclidean form was proposed by Shor in the 1960s and has been extensively studied; see, e.g.,
\cite{Bertsekas1999,ermol1967minimization, kiwiel2001convergence, nedic2001incremental, Polyak1969,  Saigal1996,shepilov1976} and references therein. The subgradient method is also a simple way to address convex feasibility problems, such as the classical Convex Feasibility Problem (CFP); see, for instance, \cite{bauschke1996projection, censor1997parallel, Deutsch1995}.

In the Riemannian setting, the CFP consists of finding a point in the intersection of finitely many geodesically convex sets, which are often given as sublevel sets of convex functions. In the manifold context, this problem is further motivated by the fact that certain feasibility problems that are nonconvex in the usual Euclidean sense may become convex after endowing the domain with a suitable Riemannian metric.
To the best of our knowledge, a first systematic analysis of a subgradient-type scheme for CFPs on manifolds was carried out by Bento and Melo \cite{BentoMelo2012}. In the aforementioned work, a subgradient algorithm is proposed and shown to converge to a feasible point under the assumption that the manifold has nonnegative sectional curvature. 
The influence of curvature was already noted in \cite{da1998geodesic,Ferreira1998}, where the authors presented a counterpart of the gradient (resp. subgradient) algorithm for optimization on manifolds, explicitly discussing the role of sectional curvature in the method’s convergence and proving convergence under the assumption of nonnegative sectional curvature. See also \cite{bento2013subgradient,BentoFerreiraMelo2017}, where subgradient-type schemes for convex minimization on Riemannian manifolds are explored and also focus on the case of nonnegative sectional curvature using comparison arguments to obtain Fejér-type estimates. 

As far as we know, the first approaches to dealing with subgradient-type schemes on Hadamard manifolds with lower bounded curvature trace back to \cite{wang2015linear, Wang2015}, which addressed the challenge of solving the feasibility problem on Riemannian manifolds with negative sectional curvature posed in \cite{BentoMelo2012}. These works have served as a reference for subsequent research on first-order methods on manifolds with lower bounded curvature; see, for example, \cite{ferreira2019gradient,Ferreira03042019,louzeiro2022projected,Wang2018} and the references therein.
Restricting ourselves to subgradient-type schemes for convex minimization on Riemannian manifolds, Wang \cite{Wang2018} was among the first to study subgradient algorithms for convex optimization on complete Riemannian manifolds whose sectional curvatures are bounded below. The analysis and convergence properties are established considering step-size rules that include the classical diminishing steps:
\[
\sum_{k}\alpha_k=\infty\quad\text{and}\quad\sum_{k}\alpha_k^2<\infty.
\]
Building on this framework, Ferreira et al. \cite{Ferreira03042019} went further by providing iteration-complexity bounds in addition to asymptotic analysis.
Despite these advances, it remains natural to ask to what extent classical Euclidean convergence mechanisms for the subgradient method can be transferred to the Riemannian setting under lower bounded curvature.

The subgradient method \eqref{eq:subgradient}, analyzed by Shepilov \cite{shepilov1976}, employs a (non-summable diminishing) step-size condition stated in terms of 
\[
\alpha_k \ge 0, \quad \lim_{k\to\infty}\alpha_k = 0, \quad
\sum_{k=1}^{\infty}\alpha_k= \infty.
\]
Assuming that the set $S$ of minimizers of $f$ is nonempty, it was proved that $d(x^k,S) \to 0$ provided at least one of the following additional conditions holds: $\{x^k\}$ is bounded; $S$ is bounded; $S$ is a linear manifold in $\mathbb{R}^n$; $n=2$. When $\sum \lambda_k^2 < \infty$ the sequence $\{x^k\}$ converges to a point in $S$. Furthermore, if $S$ has a nonempty interior, then the subgradient method terminates in a finite number of steps.

Motivated by these developments and by Shepilov's classical Euclidean analysis, we study convergence of the subgradient method on Hadamard manifolds with lower bounded curvature. Assuming the solution set $S$ is nonempty, we establish that $d(x^k,S) \to 0$  under the (non-summable diminishing) step-size condition expressed  whenever at least one of the following holds:
\begin{enumerate}
\item [a.] the sequence $\{x^k\}$ is bounded;
\item [b.] the solution set $S$ is bounded;
\end{enumerate}

Furthermore, we have the convergence of the iterates when at least one of the following holds:
\begin{enumerate}
\item [c.] $\displaystyle \sum_{k=1}^{\infty}\lambda_k^2<\infty$;
\item [d.] $\operatorname{int}(S)\neq\emptyset$, in which case the method terminates in finitely many steps.
\end{enumerate}

The main contribution of this paper is a Riemannian counterpart of Shepilov's Euclidean convergence analysis for the subgradient method on manifolds with lower bounded curvature, complementing recent advances in first-order Riemannian optimization for convex minimization and convergence analysis under lower bounded curvature.

The paper is organized as follows. In Section \ref{sec:preli} we collect fundamental definitions and auxiliary results from Riemannian geometry that are essential for our study. The section also presents elements of convex analysis on Hadamard manifolds, including an original contribution, which is an interesting result on its own and provides a useful technical tool for subsequent convergence analysis and for future first-order methods on manifolds. In  Section \ref{sec:Method}, we revisit the subgradient method and provide further results on its convergence. Finally, the last section presents the final considerations.

\section{Preliminares}
\label{sec:preli}
In this section, we present some pertinent concepts and results related to Riemannian geometry. For more details, see, for example, \cite{bento2023fenchel, carmo1992, petersen2006riemannian, sakai1996}.

Let $M$ be a complete and connected Riemannian manifold. It is known that $(M,d)$ is a complete metric space, where $d$ denotes the Riemannian distance. We denote by $T_xM$ the  tangent space of $M$ at $x\in M$ and by $TM$ the tangent bundle of $M$ set to be  those pairs $\theta=(x,v),$ $x\in M, \; v\in T_xM$.  The Riemannian metric of $M$ is denoted by $\langle  \cdot, \cdot   \rangle$, with the corresponding norm given by $\| \cdot \|$.  For $\theta=(x,v)\in TM$, $\gamma_{\theta}(\cdot)$ denotes the unique geodesic with initial conditions $\gamma_{\theta}(0)=x$ and $\gamma_{\theta}'(0)=v$.
Let us denote by $\mbox{exp}:TM\to M$, $\mbox{exp}(\theta):=\gamma_{\theta}(1)$, the exponential map. For each fixed $x\in M$, $\mbox{exp}_xv :=\exp(\theta)$ where $\theta = (x,v)$. The Levi-Civita connection associated with the Riemannian manifold $(M,{\langle} \cdot,\cdot {\rangle})$ is denoted $\nabla$. 
For a  smooth function on $M$, the metric induces its gradient denoted by $\mbox{grad} f$. The restriction of a geodesic to a closed bounded interval is called a geodesic segment. If $\gamma$ is a geodesic segment joining points $x$ and $y$ in $M$ then, for each $t\in [a,b]$, $\nabla$ induces a linear isometry, relative to ${ \langle}\cdot ,\cdot {\rangle}$, $P_{\gamma(a)\gamma(t)}:T_{\gamma(a)}M\to T_{\gamma(t)}M$, the so-called parallel transport along $\gamma$ from $\gamma(a)$ to $\gamma(t)$. In the particular case where $\gamma$ is the unique geodesic segment joining $x$ and $y$, then the parallel transport along $\gamma$ from $x$ to $y$ is denoted by $P_{xy}:T_{x}M\to T_{y}M$. 
For each $k$ positive, let us consider the Riemannian manifold $(M, \langle \,\cdot , \cdot\, \rangle_k)$ where $\langle u, v \rangle_k := \displaystyle\frac{1}{k^2}\langle u, v \rangle$ for any $u,v \in T_xM$. In particular, $||u||_k = \displaystyle\frac{1}{k}||u||$ where $|| \cdot||_k $ denotes the norm induced by the metric $\langle \,\cdot , \cdot\, \rangle_k$. These metrics are known as conformal metrics. 
We will denote the Riemannian manifold $(M, \langle \,\cdot , \cdot\, \rangle_k)$ by $M_k$ and by  $\nabla_k$ the Levi-Civita connection associated with the Riemannian manifold $M_k$. It is known that $\nabla_k = \nabla$ and, consequently, the geodesics of $M$ and $M_k$ coincide and $d_k = \displaystyle\frac{1}{k} d$ where $d$ and $d_k$ are, respectively, the distances in $M$ and $M_k$. Denoting by $\exp_x^k$ the exponential map in $M_k$, one has
$\exp_x^k v := \exp_x v$. Furthermore, the sectional curvatures of $M$ and $M_k$ satisfy the following equality
$$  K_x^k(w,\tilde{w}) = k^2K_x(w,\tilde{w}),$$
where $w,\tilde{w} \in T_xM$ are linearly independent vectors. For a unitary vector $v \in T_x M$, note that $\tilde{v}:=kv$ is a unitary vector with respect to the metric of $M_k$.

It is well known that the Poincaré hyperbolic disk $\mathbb{D}$ is a Hadamard manifold with constant sectional curvature equal to $-1$, which will be discussed later. In $\mathbb{D}$, the law of cosines for a triangle with side lengths $a, b, c$ and with angle opposite to side $a$ denoted by $\alpha$ is given by:
$$     
\cosh(a) = \cosh(b)\cosh(c) - \sinh(b)\sinh(c) \cos\alpha.                                                $$
Note that $\mathbb{D}_\kappa$ is a surface with constant sectional curvature equal to $-\kappa^2$ and, given a triangle in $\mathbb{D}$, there exists a corresponding triangle in $\mathbb{D}_\kappa$ whose angles are the same and the length of the sides are divided by $\kappa$ with respect to conformal metric. In particular, given  a triangle in $\mathbb{D}\kappa$ with side lengths $a',b',c'$ and angle opposite to $a'$ denoted by $\alpha$, one has the following:
$$     
\cosh (\kappa a') = \cosh (\kappa b')\cosh (\kappa c') - \sinh (\kappa b')\sinh ( \kappa c') \cos \alpha.
$$
As an application of the Rauch comparison theorem (see the proof of \cite[Lemma~3.1, page 259]{carmo1992}), we get the following comparison version of the law of cosines for Hadamard manifolds whose sectional curvature is bounded below and/or above. 
\begin{proposition}[Law of Cosines \cite{bento2023fenchel}]\label{prop:comparison} Let $M$ be an $n$-dimensional Hadamard manifold with
$\operatorname{sec}_M$ bounded below (resp.\ above) by $-\kappa^2$. Given a triangle in $M$ with side lengths $a,b,c$, and let $\alpha$ be the angle opposite to $a$. Then
\[
\cosh(\kappa a)
\leq \ (\text{resp. } \geq)\ 
\cosh(\kappa b)\cosh(\kappa c)
-
\sinh(\kappa b)\sinh(\kappa c)\cos\alpha.
\]
\end{proposition}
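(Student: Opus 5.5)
We compare the given triangle with a triangle in the model surface of constant curvature $-\kappa^2$, using the Rauch comparison theorem as in the proof of \cite[Lemma~3.1, page 259]{carmo1992}. The reduction to $\mathbb{D}_\kappa$ comes from the conformal metric trick described above.

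\emph{Normalization.} Passing from $M$ to $M_\kappa$ (metric $\langle\cdot,\cdot\rangle_\kappa=\kappa^{-2}\langle\cdot,\cdot\rangle$ in the notation above), the curvatures are multiplied by $\kappa^{-2}$. Hence $\operatorname{sec}_{M_\kappa}\ge -1$ (resp.\ $\le -1$). Geodesics and angles are unchanged, and side lengths become $a/\kappa$, etc. It therefore suffices to prove the case $\kappa=1$ and then rescale the lengths. To keep the notation light, we write the argument directly with a general bound $-\kappa^2$.

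\emph{Construction.} Let $p$ be the vertex at angle $\alpha$, with adjacent sides of lengths $b,c$ along unit-speed geodesics $\gamma_1,\gamma_2$ from $p$. Then $\gamma_1(b)=\exp_p(bu_1)$ and $\gamma_2(c)=\exp_p(cu_2)$ with $\angle(u_1,u_2)=\alpha$. In $\mathbb{D}_\kappa$ fix a point $\tilde p$ and a linear isometry $i:T_pM\to T_{\tilde p}\mathbb{D}_\kappa$. Set $\tilde x=\exp_{\tilde p}(b\,i(u_1))$ and $\tilde y=\exp_{\tilde p}(c\,i(u_2))$. By the model law of cosines stated above,
\[
\cosh(\kappa\, d(\tilde x,\tilde y))=\cosh(\kappa b)\cosh(\kappa c)-\sinh(\kappa b)\sinh(\kappa c)\cos\alpha .
\]
So the claim reduces to showing $a=d(x,y)\le d(\tilde x,\tilde y)$ when $\operatorname{sec}_M\ge-\kappa^2$, and $a\ge d(\tilde x,\tilde y)$ when $\operatorname{sec}_M\le -\kappa^2$. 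Here we use that $\cosh$ is increasing on $[0,\infty)$.

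\emph{Comparison.} Both manifolds are Hadamard, so $\exp_p$ and $\exp_{\tilde p}$ are diffeomorphisms. Define $\varphi=\exp_p\circ i^{-1}\circ\exp_{\tilde p}^{-1}:\mathbb{D}_\kappa\to M$.

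In the case $\operatorname{sec}_M\le-\kappa^2$: take the minimizing geodesic $\tilde\sigma$ from $\tilde x$ to $\tilde y$. Rauch's theorem, with $M$ more curved than the model, gives $|d\varphi(w)|\ge|w|$. Hence $d(x,y)\le L(\varphi\circ\tilde\sigma)$, and the inequality goes the wrong direction for a direct argument. So in this case we instead map the other way: take the geodesic $\sigma$ from $x$ to $y$ in $M$ and push it by $\varphi^{-1}$, which is length-nonincreasing. This gives $d(\tilde x,\tilde y)\le L(\varphi^{-1}\circ\sigma)\le L(\sigma)=a$, as required.

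In the case $\operatorname{sec}_M\ge-\kappa^2$: Rauch's theorem gives $|d\varphi(w)|\le|w|$. Hence $a\le L(\varphi\circ\tilde\sigma)\le L(\tilde\sigma)=d(\tilde x,\tilde y)$.

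\emph{Main obstacle.} The delicate step is applying Rauch correctly. One must decompose $d\varphi(w)$ into a radial part, which is preserved by the Gauss lemma, and a normal part given by a Jacobi field $J$ with $J(0)=0$ along the radial geodesic. Rauch then compares $|J|$ with the model Jacobi field. This requires no conjugate points on the comparison side; that holds automatically because both spaces have nonpositive curvature and $\exp$ is a diffeomorphism. The remaining care is only in keeping track of which curvature inequality yields which direction of the length comparison.
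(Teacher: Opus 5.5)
Your argument follows the route the paper indicates. The paper gives no proof of its own: it cites \cite{bento2023fenchel} and calls the inequality an application of Rauch's theorem as in the proof of \cite[Lemma~3.1]{carmo1992}. Your comparison map $\varphi=\exp_p\circ i^{-1}\circ\exp_{\tilde p}^{-1}$, with the Gauss-lemma/Jacobi-field decomposition, is exactly that argument with the flat model replaced by the hyperbolic one. You also pair each curvature bound with the correct direction of length comparison.

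There is, however, a dimension slip: a linear isometry $i\colon T_pM\to T_{\tilde p}\mathbb{D}_\kappa$ exists only when $n=2$. In the case $\operatorname{sec}_M\ge-\kappa^2$ (the only one used later, in Theorem~\ref{thm:key}) this is cosmetic. You only need $\varphi\colon\mathbb{D}_\kappa\to M$, so take $i^{-1}$ to be a linear isometric embedding of $T_{\tilde p}\mathbb{D}_\kappa$ onto $\operatorname{span}\{u_1,u_2\}$. Rauch's theorem allows the manifold with the larger curvature (here $M$) to have the larger dimension, and it requires absence of conjugate points only on that side.

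In the case $\operatorname{sec}_M\le-\kappa^2$, the step fails as written for $n\ge 3$. The map $\varphi^{-1}=\exp_{\tilde p}\circ i\circ\exp_p^{-1}$ must be evaluated along the whole $M$-geodesic $\sigma$ from $x$ to $y$. That geodesic in general leaves the surface $\exp_p(\operatorname{span}\{u_1,u_2\})$, so $i$ must be a linear isometry on all of $T_pM$, which a $2$-dimensional model cannot supply. (In do Carmo's form of Rauch's theorem, the side with the larger curvature, now the model, must also have dimension at least $n$.)

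The fix is to compare with the $n$-dimensional hyperbolic space $\mathbb{H}^n$ of curvature $-\kappa^2$ instead of $\mathbb{D}_\kappa$. Then $i$ is a genuine linear isometry and Rauch gives $|d\varphi(w)|\ge|w|$. The model law of cosines is unchanged, because every geodesic triangle in $\mathbb{H}^n$ lies in a totally geodesic copy of $\mathbb{D}_\kappa$.

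A minor point: your normalization paragraph has the scaling reversed. The metric $\kappa^{-2}\langle\cdot,\cdot\rangle$ multiplies curvatures by $\kappa^{2}$, as the paper notes. To reach the bound $-1$ you should use $\kappa^{2}\langle\cdot,\cdot\rangle$, which turns the side lengths $a,b,c$ into $\kappa a,\kappa b,\kappa c$. Since you never use that paragraph, this does not affect the proof.
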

\subsection{Convex Analysis}
In this section, we introduce the basic definitions, notation, and key properties of convex functions on Riemannian manifolds. We begin by presenting Busemann functions as an important class of convex examples and highlight an explicit formula for their gradient on the Poincaré disk, which both illustrates our method and suggests directions for future work. Afterwards, we recall the notion of the subdifferential for a convex function and summarize its main properties; for further details, see \cite{azagra2005nonsmooth,BentoMelo2012, udriste1994, Wang2015}.

A function $f\colon M \to \mathbb{R}$ is said to be convex if and only if, for every geodesic segment $\gamma\colon [a,b]\to M$, the composition $f\circ \gamma\colon [a,b]\to \mathbb{R}$ is convex (in the usual sense). It is known that if \(f\colon M\to\mathbb{R}\) is convex, then \(f\) is continuous on \(M\).

An important class of convex functions on nonpositively curved spaces is provided by Busemann functions. Given \(p\in M\), $v\in T_pM$, \(\|v\|=1\),  and \(\gamma:[0,\infty)\to M\) a unit-speed geodesic starting at \(p\) with \(v=\gamma'(0)\in T_pM\), for \(x\in M\) define
\[
\psi_x(t)=d(x,\gamma(t))-t,\qquad t\ge 0.
\]
The function \(\psi_x(\cdot)\) is nonincreasing and bounded below by \(-d(x,p)\), and hence the limit
\[
B^p_v(x):=\lim_{t\to\infty}\bigl(d(x,\gamma(t))-t\bigr)
\]
exists for every \(x\in M\). This is the Busemann function associated to \(\gamma\), whose basic properties include (see \cite[Lemma 8.18]{bridson2013metric} and \cite[Lemma 3.4]{ballmann2013manifolds}):
\begin{itemize}
\item[(P1)] \(B^p_v:M\to\mathbb{R}\) is \(1\)-Lipschitz; 
\item[(P2)] \(B^p_v\) is convex, \(C^1\), and satisfies \(\|\mbox{grad} \, B^p_v(x)\|=1\) for all \(x\in M\).
\end{itemize}
Since they are convex and geometrically explicit on some important Hadamard manifolds, Busemann functions provide natural objective functions or constraint functions for examples and counterexamples in Riemannian optimization. Connections to recent optimization papers can be seen in results by Bento, Cruz Neto, and Melo, who showed that Busemann functions are particularly useful for addressing the absence of nonconstant affine functions on Hadamard manifolds of negative curvature. Their contributions highlight the practical and theoretical suitability of Busemann functions as substitutes for affine functions in formulating and analyzing optimization methods on such manifolds \cite{bento2022combinatorial,bento2023fenchel,de2024new}.  
Because of these features, Busemann functions form a rich, concrete class of convex functions that link global geometric structure with variational and optimization questions on manifolds. Next, we provide an explicit formula for the gradient of Busemann functions on the Poincaré disk, which will be useful to illustrate our method and to suggest directions for future work.

Two unit-speed geodesics \(\gamma\) and \(\alpha\) are called \emph{asymptotic} if there exists \(C>0\) such that
\[
d(\gamma(t),\alpha(t))\le C\quad\text{for all }t\ge0.
\]
Given any unit-speed geodesic \(\beta\) and any \(q\in M\), there exists a unique unit-speed geodesic \(\alpha\) with \(\alpha(0)=q\) that is asymptotic to \(\beta\) (see \cite[Definition~1.1]{eberlein1973visibility}).
A key consequence (\cite[Proposition~ 3.5]{eberlein1973visibility}) is: for each \(x\in M\) let \(\alpha\) be the unique unit-speed geodesic with \(\alpha(0)=x\) that is asymptotic to the unit-speed geodesic \(\gamma\) where $\gamma(0) = p$ and $v=\gamma'(0)$. Then
\[
\mbox{grad} \, B^p_v(x)=-\alpha'(0).
\]
In particular, setting \(v_x:=\alpha'(0)\), we have \(\|\mbox{grad} B^p_v(x)\|=1\) and the gradient direction equals the negative of the initial velocity vector of the asymptotic geodesic emanating from \(x\).
Let \(  \mathbb{D}=\{z\in\mathbb{C}: |z| <1\}\) be the unit disk endowed with the Poincaré metric
\[
\langle u,v\rangle_{p}=\frac{4\langle u,v\rangle}{(1-|p|^{2})^{2}},\qquad u,v\in T_{p}\mathbb{D},
\]
where \(\langle\cdot,\cdot\rangle\) denotes the standard inner product. We denote by $\|u\|$ the norm of the vector $u\in T_p \mathbb{D}$ in this metric, i.e., $||u|| = \frac{2|u|}{1-|p|^2}$. With this metric $\mathbb{D}$ is a Hadamard manifold with constant curvature $-1$, the Poincaré unit disk $\mathbb{D}$.
For a unit direction \(\eta\in\mathbb{C}\) and the geodesic ray \(\gamma(t)=\eta\tanh(t/2)\) emanating from the origin, the associated Busemann function admits the explicit form
\[
B^{0}_{\eta}(x)=\ln\!\left(\frac{|x-\eta|^{2}}{1-|x|^{2}}\right),\qquad x\in \mathbb{H}^{2}_{B},
\]
see, e.g., \cite[page 273]{bridson2013metric}. This closed form is convenient for explicit computations of horocycles and horoballs. We will now calculate an explicit formula for the gradient of the Busseman function $B^{0}_{\eta}$.
 Given $p \in \mathbb{D}$ and $\xi \in T_p\mathbb{D}$ with $||\xi|| = 1$. The Möbius transformation $\mathcal{M}:\mathbb{D}\to \mathbb{D}$ defined by
$$        
\mathcal{M}(z) = \displaystyle\frac{|\xi|\xi z + |\xi|^2p}{|\xi| \overline{p} \xi z + |\xi|^2}, 
$$
is an isometry such that $\mathcal{M}(0) = p$, see \cite{bento2023fenchel}. Observe that $D\mathcal{M}_{0}(\beta'(0)) = \xi$ where $\beta(t) = \tanh(t/2)$ in the complex notation. In particular, $(\mathcal{M} \circ \beta)(t)$ is the unique geodesic on the Poincaré disk such that $ (\mathcal{M} \circ \beta)(0) = p$ and $ (M \circ \beta)'(0) = \xi $, which gives a general expression for the geodesics on the Poincaré disk. 

Given $p \in \mathbb{D}$ let $c(t)$ the unique geodesic asymptotic to \(\gamma(t)=\eta\tanh(t/2)\) where $c(0) = p$ and $c'(0) = \xi$. We can write
$$   c(t) = \displaystyle\frac{\tanh(t/2)|\xi|\xi + |\xi|^2p}{|\xi| \overline{p} \xi \tanh(t/2) + |\xi|^2}.                       $$                                    
Since $c$ and $\gamma$ are asymptotic, it follows that $\displaystyle\lim_{t \to \infty} c(t) = \eta $ in the Euclidean distance thus

$$ \displaystyle\frac{|\xi|\xi + |\xi|^2p}{|\xi| \overline{p} \xi  + |\xi|^2} = \eta \implies \xi = \displaystyle\frac{|\xi|(\eta-p)}{1-\eta\overline{p}}.$$
On the other hand, $1 = ||\xi|| = \frac{2|\xi|}{1-|p|^2}$. Hence, $ \xi = \displaystyle\frac{1-|p|^2}{2}\displaystyle\frac{(\eta-p)}{1-\eta\overline{p}}  $ and $$\mbox{grad} B^{0}_{\eta}(p) = \displaystyle\frac{1-|p|^2}{2}\displaystyle\frac{(p-\eta)}{1-\eta\overline{p}}. $$ 
\begin{figure}[htbp]
\centering

\resizebox{0.92\textwidth}{!}{
\begin{tikzpicture}[
    >=Latex,
    line cap=round,
    line join=round,
    font=\sffamily,
    scale=0.68,
    every node/.style={transform shape}
]

\definecolor{diskborder}{RGB}{32,42,44}
\definecolor{xred}{RGB}{190,24,35}
\definecolor{ygray}{RGB}{108,116,120}
\definecolor{curveblue}{RGB}{18,72,125}
\definecolor{tangentgray}{RGB}{150,150,150}

\def\R{3.00}

\coordinate (O) at (0,0);
\coordinate (L) at (-\R,0);
\coordinate (Rgt) at (\R,0);
\coordinate (T) at (0,\R);
\coordinate (B) at (0,-\R);
\coordinate (C) at (\R,\R);


\fill[white] (-3.55,-3.35) rectangle (8.60,3.35);


\draw[
    densely dotted,
    line width=0.75pt,
    black!70
]
(-2.8,0.92) -- (2.8,0.92);


\draw[
    diskborder,
    line width=1.55pt
]
(O) circle (\R);


\draw[
    xred,
    line width=1.45pt
]
(L) -- (Rgt);

\draw[
    xred,
    line width=1.45pt,
    -{Latex[length=3.3mm,width=2.4mm]}
]
(-0.15,0) -- (-1.45,0);

\draw[
    xred,
    line width=1.45pt,
    -{Latex[length=3.3mm,width=2.4mm]}
]
(0.15,0) -- (1.45,0);


\draw[
    ygray,
    line width=1.35pt
]
(B) -- (T);

\draw[
    ygray,
    line width=1.35pt,
    -{Latex[length=3.2mm,width=2.3mm]}
]
(0,0.15) -- (0,1.45);

\draw[
    ygray,
    line width=1.35pt,
    -{Latex[length=3.2mm,width=2.3mm]}
]
(0,-0.15) -- (0,-1.45);


\draw[
    curveblue,
    line width=1.55pt,
    decoration={
        markings,
        mark=at position 0.29 with {
            \arrow{Latex[length=3.1mm,width=2.3mm]}
        },
        mark=at position 0.62 with {
            \arrow{Latex[length=3.1mm,width=2.3mm]}
        }
    },
    postaction={decorate}
]
(T)
arc[
    start angle=180,
    end angle=270,
    radius=\R
];


\draw[
    tangentgray,
    dashed,
    line width=1pt
]
(-1.2,\R) -- (1.2,\R);

\draw[
    tangentgray,
    dashed,
    line width=1pt
]
(\R,-1.2) -- (\R,1.2);


\pic[
    draw=diskborder,
    angle radius=7pt
]
{right angle = O--T--C};

\pic[
    draw=diskborder,
    angle radius=7pt
]
{right angle = O--Rgt--C};


\foreach \P in {L,Rgt,T,B,O}{
    \fill[white] (\P) circle (0.073);

    \draw[
        diskborder,
        line width=1.25pt
    ]
    (\P) circle (0.073);
}


\begin{scope}[shift={(3.65,-1.95)}]

\node[
    anchor=west,
    font=\sffamily\scriptsize
]
at (0,0.70)
{Red: straight geodesic};

\draw[
    xred,
    line width=1.15pt,
    Latex-Latex
]
(3.10,0.70) -- (3.85,0.70);

\node[
    anchor=west,
    font=\sffamily\scriptsize
]
at (0,0.20)
{Gray: straight geodesic};

\draw[
    ygray,
    line width=1.15pt,
    Latex-Latex
]
(3.10,0.20) -- (3.85,0.20);

\node[
    anchor=west,
    font=\sffamily\scriptsize
]
at (0,-0.30)
{Blue: curved geodesic};

\draw[
    curveblue,
    line width=1.15pt,
    -Latex
]
(3.10,-0.30) -- (3.85,-0.30);

\node[
    anchor=west,
    font=\sffamily\scriptsize
]
at (0,-0.80)
{Dashed: tangents};

\draw[
    tangentgray,
    dashed,
    line width=1.0pt
]
(3.10,-0.80) -- (3.85,-0.80);

\end{scope}

\end{tikzpicture}
}

\caption{ Geodesics in the Poincaré disk}

\label{fig:poincare_disk}
\label{fig:Poincare}
\end{figure}

\noindent The Figure \ref{fig:poincare_disk} represents asymptotic geodesics in the Poincaré disk. We will now study a convex function whose properties will be used for further discussion later in the paper. Consider now the geodesic rays $\alpha(t) = \tanh(-t/2)$, $\beta(t) = \tanh(t/2)$ and the associated Busemann functions $B_{\alpha}$ and $B_{\beta}$ in Poincaré disk. The function $f:\mathbb{D} \to \mathbb{R}$ defined by  $f(p) = B_{\beta}(p) +  B_{\alpha}(p)$ is convex. Note that $f(0) = 0$, for $p \neq 0$ let $\theta$ the angle between $\exp_0^{-1}(p)$ and $\beta'(0)$ and $t = d(0,p)$, where $d$ denotes the distance in the Poincaré disk. From \cite[Lemma~1]{bento2023fenchel}, it follows that $$f(p) = \ln(\cosh t - \sinh t \cos \theta) + \ln(\cosh t + \sinh t \cos \theta).$$ Thus, $f(p) = \ln(1+\sinh^2 t \sin^2 \theta) \geq 0$ and $f(p) = 0$ only if $\theta = 0$ or $\theta = \pi$, and $S:=\arg\min f$ coincide with the points of the disk on the $x$-axis. In particular, $S$ is unbounded with respect to Poincaré metric. If $p=qi$ with $-1 < q <1$ we have
\begin{eqnarray*}
\mbox{grad} f(p) = \displaystyle\frac{1-q^2}{2}\Big( \displaystyle\frac{1+qi}{1-qi} - \displaystyle\frac{1-qi}{1+qi} \Big)= \displaystyle\frac{2(1-q^2)}{1+q^2}qi.
\end{eqnarray*}
Therefore, $-\mbox{grad} \; f(p)$ always points towards the origin for points on the $y$-axis.

The following proposition is well known in the Euclidean setting. However, to the best of our knowledge, no proof in the Riemannian setting appears in the literature. For completeness, we provide below a formal statement and proof in the convex case.

\begin{proposition}[Compactness of sublevel sets]\label{prop:CSS}
Let $M$ be a Hadamard manifold and let $f:M\to\mathbb{R}$ be convex. Suppose that the solution set
\[
S:=\arg\min f=\{x\in M:\ f(x)=f^*\}, 
\qquad 
f^*:=\inf_{x\in M} f(x),
\]
is nonempty and compact. Then, for every $a\geq f^*$, the sublevel set
\[
[f\leq a]:=\{x\in M:\ f(x)\leq a\}
\]
is compact.
\end{proposition}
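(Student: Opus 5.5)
By the Hopf--Rinow theorem, closed and bounded subsets of the complete manifold $M$ are compact. Each sublevel set $[f\le a]$ is closed because convex functions on $M$ are continuous. It is therefore enough to prove that $[f\le a]$ is bounded. We argue by contradiction, mimicking the Euclidean proof along geodesic rays emanating from a minimizer.

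Fix $\bar x\in S$. Since $S$ is compact, there is $r>0$ such that $S\subset B(\bar x,r)$. Consequently, on the geodesic sphere $\Sigma:=\{y\in M:\ d(\bar x,y)=r\}$ we have $f>f^*$. The set $\Sigma$ is compact, because it is the image of the compact Euclidean sphere of radius $r$ in $T_{\bar x}M$ under $\exp_{\bar x}$. Hence
\[
\delta:=\min_{y\in\Sigma} f(y)-f^*>0 .
\]

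Suppose $[f\le a]$ is unbounded. Then there are points $x^j$ with $f(x^j)\le a$ and $t_j:=d(\bar x,x^j)\to\infty$. Since $M$ is Hadamard, there is a unique minimizing unit-speed geodesic $\gamma_j:[0,t_j]\to M$ with $\gamma_j(0)=\bar x$ and $\gamma_j(t_j)=x^j$. For $t_j>r$, the point $y^j:=\gamma_j(r)$ lies on $\Sigma$. Convexity of $f\circ\gamma_j$ on $[0,t_j]$, written at the intermediate parameter $r=(1-r/t_j)\cdot 0+(r/t_j)\, t_j$, gives
\[
f^*+\delta\le f(y^j)\le \Bigl(1-\tfrac{r}{t_j}\Bigr)f^*+\tfrac{r}{t_j}\,f(x^j)\le f^*+\tfrac{r}{t_j}(a-f^*).
\]
Letting $j\to\infty$, the right-hand side tends to $f^*$, which contradicts $\delta>0$. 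Therefore $[f\le a]$ is bounded, and hence compact.

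The only genuinely Riemannian points are the following:
\begin{itemize}
\item the existence of minimizing geodesics from $\bar x$, which holds since $M$ is Hadamard, so $\exp_{\bar x}$ is a diffeomorphism;
\item the fact that $\gamma_j(r)$ lies at distance exactly $r$ from $\bar x$, which holds because $\gamma_j$ is minimizing;
\item the compactness of $\Sigma$, which guarantees that the minimum defining $\delta$ is attained and positive.
\end{itemize}
I expect the main care to be needed in justifying $\delta>0$: it requires that $\Sigma\cap S=\emptyset$ and that $f$ is continuous on the compact set $\Sigma$. Both follow from the choice of $r$ and the continuity of convex functions noted earlier. No curvature bound is needed.
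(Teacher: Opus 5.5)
Your proof is correct. It uses the same convexity estimate as the paper, along the minimizing geodesic from $\bar x\in S$ to a distant point of $[f\le a]$, but it reaches the contradiction differently.

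The paper normalizes the initial velocities and passes to a subsequence $v^k\to v$ on the unit sphere of $T_{\bar x}M$. It then uses a lemma on continuous dependence of geodesics on initial data to get $\gamma_k(t)\to\gamma(t):=\exp_{\bar x}(tv)$. Letting $k\to\infty$ at each fixed $t$, it concludes that the whole ray $\gamma([0,\infty))$ lies in $S$, contradicting compactness.

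You instead evaluate the estimate only at the fixed parameter $r$ and compare it with the uniform gap $\delta>0$ on the compact sphere $\Sigma$. This removes the need for any subsequence or limiting geodesic.

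Your version is also quantitative. Rearranging your inequality gives $f(x)\ge f^*+\delta\,d(x,\bar x)/r$ whenever $d(x,\bar x)\ge r$. Hence $[f\le a]\subset B[\bar x,\max\{r,\,r(a-f^*)/\delta\}]$ explicitly, and $f$ grows at least linearly. This is a converse to the coercivity condition in Remark~\ref{Ocoercive}.

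In exchange, the paper's limiting argument gives a structural fact that yours does not. If some sublevel set is unbounded, then every minimizer is the origin of a geodesic ray contained in $S$.
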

\begin{proof}
The convexity of $f$ implies continuity and, hence, $[f\leq a]$ is closed. It suffices to show that $[f\leq a]$ is bounded. Since the statement is trivial if \(a=f^*\), assume \(a>f^*\) and suppose, for a contradiction, that \([f\leq a]\) is unbounded. Thus, there exists $\{x^k\}\subset [f\leq a]$ such that $d(x^k,\bar{x})\to\infty$ for some $\bar{x}\in S$. Define $\ell_k:=d(x^k,\bar{x})$ and let $\gamma_k:[0,\ell_k]\to M$ be the unit-speed minimizing geodesic from $\bar{x}$ to $x^k$,
\[
\gamma_k(t)=\exp_{\bar{x}}\!\left(
t\,\frac{\exp_{\bar{x}}^{-1}(x^k)}
{\|\exp_{\bar{x}}^{-1}(x^k)\|}
\right).
\]
Let $v^k:=\exp_{\bar{x}}^{-1}(x^k)/\|\exp_{\bar{x}}^{-1}(x^k)\| \in T_{\bar{x}}M$. Since the unit sphere of $T_{\bar{x}}M$ is compact, passing to a subsequence if necessary, we may assume $v^k\to v \in T_{\bar{x}}M$ with $\|v\|=1$. By \cite[Lemma 3.2]{batista2020}, $\gamma_k(t)\to\gamma(t):=\exp_{\bar{x}}(tv)$ for every fixed $t\ge0$. For $t\ge0$ and $k$ large enough such that $t\le\ell_k$. From the convexity of $f\circ\gamma_k$ we have
\[
f(\gamma_k(t))
\le
\left(1-\frac{t}{\ell_k}\right)f(\bar{x})
+
\frac{t}{\ell_k}f(x^k)
\le
f^*+\frac{t}{\ell_k}(a-f^*).
\]
Since $\gamma_k(t) \to \gamma(t)$ and $f$ is continuous, it follows that
\[
f(\gamma(t)) = \lim_{k\to\infty} f(\gamma_k(t)) \le \lim_{k\to\infty} \left(f^* + \frac{t}{\ell_k}(a-f^*)\right) = f^*.
\] Hence $f(\gamma(t))=f^*$ for all $t\ge0$, implying $\gamma([0,\infty))\subset S$. This contradicts the compactness of $S$, because $\gamma$ is a geodesic ray. Therefore, $[f\leq a]$ is bounded and the conclusion follows from the continuity of $f$.
\end{proof}

Given a point $x \in M$, a vector $v \in T_{x}M$ is called a \emph{subgradient} of $f$ at $x$ 
if the following inequality holds:
\begin{equation}\label{subineq}
f(y) \geq f(x) + \langle v, \exp_{x}^{-1}y \rangle, \qquad  y \in M.
\end{equation}
The set of all subgradients of $f$ at $x$ is denoted by $\partial f(x)$. Since $f$ is convex, 
the set $\partial f(x)$ is nonempty, compact, and convex for every $x \in M$. When  $f$ is differentiable at $x$, the set $\partial f(x)$ reduces to a single element, namely the 
gradient vector of $f$ at $x$. Moreover, 
if $\{z^k\}$ is a bounded sequence and  $v^k \in \partial f(z^k)$ for all $k \in \mathbb{N}$, then the sequence $\{v^k\}$ is bounded.

Using the law of cosines for Hadamard manifolds with sectional curvature bounded below (see Proposition \ref{prop:comparison}), we establish the following theorem, which is essential for the proof of the main results of this paper.
\begin{theorem}\label{thm:key}
Let $f \colon M \to \mathbb{R}$ be a convex function, $x,\bar{x} \in M$ and $\delta > 0$ such that $f(u) < f(x)$ for every $u \in B[\bar{x},\delta]$. If $d(x, \bar{x}) \geq 2\delta$, then 
\begin{equation*}\label{main}
\cosh (\kappa d(z,\bar{x})) \leq  \cosh (\kappa d(x,\bar{x}))\cosh (\kappa d(z,x)) - \sinh (\kappa d(z,x))\sinh\left(\kappa\delta/2\right),
\end{equation*}
where  $\displaystyle z=\exp_{x}{(\lambda s)}$,~ $ \displaystyle s=-\frac{g}{||g||}$, ~ $g \in \partial f(x)$,~$\lambda >0$.
\end{theorem}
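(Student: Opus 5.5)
Apply Proposition~\ref{prop:comparison} (lower curvature bound $-\kappa^2$) to the geodesic triangle with vertices $x$, $z$, $\bar{x}$, taking the angle $\alpha$ at $x$ between $s$ and $\exp_x^{-1}\bar{x}$. Since $\exp_x^{-1}z=\lambda s$, we have $d(z,x)=\lambda$. The proposition gives
\[
\cosh(\kappa d(z,\bar x))\le \cosh(\kappa d(x,\bar x))\cosh(\kappa d(z,x))-\sinh(\kappa d(x,\bar x))\sinh(\kappa d(z,x))\cos\alpha .
\]
It therefore suffices to show
\[
\sinh(\kappa d(x,\bar x))\cos\alpha\ \ge\ \sinh(\kappa\delta/2).
\]
Since $d(x,\bar x)\ge 2\delta>\delta/2$ and $\sinh$ is increasing, this reduces to a lower bound on $\cos\alpha$, and there is some room to spare.

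To control $\alpha$, use the subgradient inequality and the hypothesis on the ball. For every $u\in B[\bar x,\delta]$ we have $f(u)<f(x)$, so \eqref{subineq} gives $\langle g,\exp_x^{-1}u\rangle\le f(u)-f(x)<0$. In particular $g\neq 0$ and $s$ is well defined. Moreover,
\[
\langle s,\exp_x^{-1}u\rangle>0 \quad\text{for all } u\in B[\bar x,\delta].
\]
In words, the whole ball lies in the open half-space determined by $s$, seen from $x$ through $\exp_x^{-1}$.

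Next, choose a suitable $u$. Let $\sigma$ be the unit-speed geodesic through $\bar x$ with $\sigma'(0)\perp \exp_{\bar x}^{-1}x$, lying in the "plane" spanned by the directions toward $x$ and the component of $s$ transported to $\bar{x}$, oriented so that it moves away from $s$. Take $u=\sigma(\pm\delta)\in B[\bar x,\delta]$. Consider the geodesic triangle $x,\bar x,u$ with a right angle at $\bar x$, and compare with the model space of curvature $-\kappa^2$; here Proposition~\ref{prop:comparison} is used in its angle form, together with Toponogov-type angle comparison. The angle $\beta$ at $x$ between $\exp_x^{-1}\bar x$ and $\exp_x^{-1}u$ is then bounded below by the model angle, which satisfies the hyperbolic right-triangle relation
\[
\sin\beta_0=\frac{\sinh(\kappa\delta)}{\sinh(\kappa d(u,x))}.
\]
Because the ball lies in the half-space, $\alpha\le \pi/2-\beta$, so
\[
\cos\alpha\ge\sin\beta\ge \frac{\sinh(\kappa\delta)}{\sinh(\kappa d(u,x))}.
\]
Using $d(u,x)\le d(x,\bar x)+\delta$ and $d(x,\bar x)\ge 2\delta$, this yields
\[
\sinh(\kappa d(x,\bar x))\cos\alpha\ \ge\ \frac{\sinh(\kappa d(x,\bar x))\,\sinh(\kappa\delta)}{\sinh(\kappa d(x,\bar x)+\kappa\delta)}.
\]
Expanding $\sinh(A+B)=\sinh A\cosh B+\cosh A\sinh B$ and using $\tanh(\kappa d(x,\bar x))\ge\tanh(2\kappa\delta)$ reduces the remaining step to the elementary inequality
\[
\frac{\sinh(\kappa\delta)}{\cosh(\kappa\delta)+\sinh(\kappa\delta)/\tanh(2\kappa\delta)}\ \ge\ \sinh(\kappa\delta/2),
\]
which is a routine one-variable check. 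This is the role of the factor $2$ in $2\delta$ and the $\delta/2$ in the conclusion.

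The main obstacle is the angle step. In dimension greater than $2$, one must choose $u$ in the right two-dimensional direction so that the half-space condition actually forces $\alpha\le\pi/2-\beta$. One must also justify the angle comparison in the correct direction: a lower curvature bound gives that angles in $M$ are at least the model angles (Toponogov's hinge/angle version), and this does not follow from the stated law of cosines alone. My first attempt would be to avoid the angle comparison. Instead I would apply the stated law of cosines directly to the triangle $x,\bar x,u$ with the right angle at $\bar x$, which bounds $d(x,u)$ from above. Then I would work in $T_xM$: the half-space condition bounds $\alpha$ once we know that $\exp_x^{-1}u$ makes a large enough angle with $\exp_x^{-1}\bar x$. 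That angle can be bounded using the first-variation/comparison estimate for the direction $\exp_x^{-1}u$. If this gets messy, I would fall back on the cruder choice $u$ with $d(\bar x,u)=\delta$ along the direction of the transported $-s$, which gives a weaker but sufficient bound with the constant $\delta/2$.
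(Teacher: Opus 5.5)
Even if the angle step is granted, your route cannot reach the stated constant, because the final ``routine one-variable check'' is false. Put $t=\kappa\delta$. Then $\sinh t/\tanh 2t=\cosh 2t/(2\cosh t)$, so your left-hand side equals $\sinh 2t/(4\cosh^2 t-1)$. This is below $1/2$ for every $t>0$, which is equivalent to $4\cosh t\,e^{-t}>1$. Meanwhile $\sinh(t/2)$ is unbounded. At $t=1$ you get about $0.425<\sinh(1/2)\approx 0.521$, and the inequality holds only for $t$ up to roughly $0.7$.

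The loss is not only in the estimate $d(u,x)\le D+\delta$, where $D:=d(x,\bar x)$. Work in the model space of curvature $-\kappa^2$, which satisfies all the hypotheses. A test point $u$ with a right angle at $\bar x$ gives exactly $\tan\beta=\tanh(\kappa\delta)/\sinh(\kappa D)$, so $\sinh(\kappa D)\sin\beta=\tanh(\kappa\delta)\cos\beta<1$. Hence the bound $\cos\alpha\ge\sin\beta$ can never give $\sinh(\kappa D)\cos\alpha\ge\sinh(\kappa\delta/2)$ once $\kappa\delta\ge 2\operatorname{arcsinh}1$. Placing the test point at a right angle at $\bar x$ is too lossy.

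The angle step is also a genuine gap, as you suspect. For $n\ge 3$, a geodesic issued from $\bar x$ in a direction built from parallel-transported vectors generally gives $\exp_x^{-1}u\notin\operatorname{span}\{s,v\}$, where $v=\exp_x^{-1}\bar x$. The spherical triangle inequality then only gives $\angle(s,\exp_x^{-1}u)\le\alpha+\beta$, which is the wrong direction for deducing $\alpha+\beta<\pi/2$. Your fallback along the transported $-s$ has the same defect.

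Your model-angle claim also points the wrong way. The comparison triangle with the same side lengths has angle at most $\pi/2$ at $\bar x$. The hyperbolic law of sines then gives $\sin\tilde\beta\le\sinh(\kappa\delta)/\sinh(\kappa d(u,x))$, an upper bound, not the lower bound you assert. Angle comparison itself does follow from Proposition~\ref{prop:comparison}: solving it for the cosine shows every angle is at least its model angle, and this is exactly how the paper bounds $\cos\angle(v,w)$.

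The missing idea is to choose the test point through $T_xM$, inside the plane of $s$ and $v$. The paper's proof runs as follows.
\begin{itemize}
\item It takes $w=v-\rho s$ with $y=\exp_x w$ and $d(y,\bar x)=\delta$.
\item The subgradient inequality gives $0<\rho<\langle s,v\rangle$.
\item Proposition~\ref{prop:comparison} in the triangle $\bar x,x,y$ bounds $\cos\angle(v,w)$ above by the model cosine $T$.
\item The identity $\langle v,w\rangle=\|v\|^2-\rho\langle s,v\rangle$ turns this into $\cos\alpha\ge\sqrt{1-\|w\|T/D}$.
\item The hypothesis $D\ge2\delta$ is used only through $\|w\|\ge D-\delta\ge D/2$.
\end{itemize}
The resulting bound $\sinh(\kappa D)\cos\alpha>\sinh(\kappa\delta/2)$ has no built-in cap.

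Your half-space idea can also be rescued directly. Let $e\in\operatorname{span}\{s,v\}$ be a unit vector orthogonal to $s$ with $\langle e,v\rangle\ge 0$. The points $\exp_x(te)$ lie outside $B[\bar x,\delta]$. Apply Proposition~\ref{prop:comparison} at $x$, where the angle is $\pi/2-\alpha$, and minimize over $t>0$. This yields $\cosh^2(\kappa\delta)<1+\sinh^2(\kappa D)\cos^2\alpha$, that is, $\sinh(\kappa D)\cos\alpha>\sinh(\kappa\delta)$, which is more than enough.
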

\begin{proof}
Let $v:= \exp_{x}^{-1}\bar{x}$ and $B := \exp_{x}^{-1}(B[\bar{x},\delta])$. Since $\exp_{x}:T_{x}M \to M$ is a diffeomorphism, there exists $\rho>0$ such that $w := v - \rho s$ is a point in the border of $B$. For $y:= \exp_{x}w$ it follows that $d(y, \bar{x}) = \delta$ and  $w=\exp^{-1}_xy$. Thus, from (\ref{subineq}) we have
\begin{eqnarray*}
f(y) \geq f(x) + \langle g, w \rangle =
 f(x) + \langle g, v \rangle - \rho \langle g, s\rangle = f(x) + \langle g, v \rangle + \rho|| g ||,
\end{eqnarray*}
where the last two equalities are obtained from the definitions of $w$ and $s$. From hypothesis, $f(y) < f(x)$. Hence,
\begin{equation}\label{ineqrho}
0 < \rho < \langle s, v \rangle.
\end{equation}
Using again the definition of $w$ and letting $\alpha:= \measuredangle(v,w)$, one has
\[
\|v\|^2 - \rho\langle s, v \rangle=\langle v, w \rangle = ||v||||w||\cos \alpha,
\]
and, consequently,
\begin{equation}\label{ineqaux}
 \rho\langle s, v \rangle  = ||v||^2 - ||v||||w|| \cos \alpha.
\end{equation}
For considering now the geodesic triangle of vertices $\bar{x}, x, y$, from Proposition \ref{prop:comparison}, we have
$$    
\cosh (\kappa \delta) \leq  \cosh (\kappa d(x,\bar{x}))\cosh (\kappa ||w||) - \sinh (\kappa d(x,\bar{x})) \sinh (\kappa ||w||) \cos{\alpha}.       
$$
In particular, 
\begin{equation}\label{eq:T}
\cos \alpha \leq T:=\displaystyle \frac{\cosh (\kappa d(x,\bar{x})) \cosh (\kappa ||w||) - \cosh (\kappa \delta)}{\sinh (\kappa d(x,\bar{x})) \sinh (\kappa ||w||)}.
\end{equation} 
From (\ref{ineqrho}), (\ref{ineqaux}) and taking into account definition of $v$, it follows that
\begin{equation}\label{desigualdade}
\langle s, v \rangle^2 \geq  d(x,\bar{x})^2 - d(x,\bar{x})||w|| T.
\end{equation}
We claim that $||w|| < d(x,\bar{x})$ and $d(x,\bar{x})^2 - d(x, \bar{x})||w|| T \geq 0 $. In fact, consider the triangle of vertices $0, w, v$ in $T_{x}M$ and let $\beta$ be the angle in the vertex $v$. By using the law of cosines and definition of $v$ and $w$, we obtain 
\begin{eqnarray*}
\cos \beta =\displaystyle\frac{\langle w - v, 0-v \rangle}{||w -v|| \cdot ||v||}=\displaystyle\frac{\langle s, v \rangle}{d(x,\bar{x})},
\end{eqnarray*}
\begin{eqnarray*}
\|w\|^2 = d(x,\bar{x})^2 + \rho^2 - 2 d(x,\bar{x})\rho \cos \beta.
\end{eqnarray*}
Combining two last equality and taking into account \eqref{ineqrho}, we have
\[
\|w\|^2=d(x,\bar{x})^2 + \rho(\rho - 2\langle s, v \rangle)
< d(x,\bar{x})^2,
\]
from which the first part of the statement follows. For proving the second part, let us  apply the triangle inequality in the geodesic triangle of vertices $\bar{x}, x, y$. It follows that
\begin{equation}\label{ImpIneq1}
0 < d(x,\bar{x}) - ||w|| \leq \delta,
\end{equation}
and, hence (for using that  $\cosh(t)$ is increasing for $t\ge  0$), it follows that:
\begin{eqnarray}\label{eq:1001}
\cosh (\kappa \delta)&\geq& \cosh (\kappa d(x,\bar{x}) - \kappa \|w\|),\nonumber\\ 
&=&
\cosh (\kappa d(x,\bar{x}))\cosh (\kappa \|w\|) 
- \sinh (\kappa d(x,\bar{x}))\sinh (\kappa \|w\|), 
\end{eqnarray}
where the last equality follows by using \(\cosh(a-b)=\cosh a\cosh b-\sinh a\sinh a\), with $a:=\kappa d(x,\bar{x})$ and $b:=\kappa\|w\|$. 
On the other hand, from the definition of $T$ in \eqref{eq:T}, we get
\begin{eqnarray*}
T &=& \displaystyle\frac{\cosh (\kappa d(x,\bar{x}))\cosh (\kappa ||w||) - \cosh (\kappa\delta)}{\sinh (\kappa d(x,\bar{x}))\sinh (\kappa ||w||)} \\
&=& 1 + \displaystyle\frac{\cosh (\kappa d(x,\bar{x}))\cosh (\kappa ||w||) - \sinh (\kappa d(x,\bar{x}))\sinh (\kappa ||w||) - \cosh (\kappa\delta)}{\sinh (\kappa d(x,\bar{x}))\sinh (\kappa ||w||)},
\end{eqnarray*}
which, combined with \eqref{eq:1001}, implies that $T\leq 1$. In particular, using again that $||w|| < d(x,\bar{x})$, it follows that $d(x,\bar{x})^2 - d(x,\bar{x})||w|| T \geq d(x,\bar{x})^2 - d(x,\bar{x})||w||\geq 0$, which proves the second part of the statement. From (\ref{desigualdade}), we obtain
\begin{equation}\label{desigualdade2}
\langle s, v \rangle \geq  \sqrt{d(x,\bar{x})^2 - d(x,\bar{x})||w|| T.}
\end{equation}
Consider now the geodesic triangle with vertices \(\bar{x},x,z\), and let \(\gamma=\measuredangle(s,v)\). Thus,
\[
\cos\gamma=\frac{\langle s,v\rangle}{d(x,\bar{x})}\geq\sqrt{1-\frac{\|w\|\,T}{d(x,\bar{x})}},
\]
where the final inequality follows from (\ref{desigualdade2}). From Proposition \ref{prop:comparison}, it follows that
\begin{equation}\label{ineq:2001}
\cosh (\kappa d(z,\bar{x}))
\leq  \cosh (\kappa d(x,\bar{x}))\cosh (\kappa d(z,x)) - \sinh (\kappa \lambda )C(x,\delta),
\end{equation}
where
\(
C(x,\delta) := \sinh (\kappa d(x,\bar{x}))\sqrt{1 - \displaystyle(||w|| T)/d(x,\bar{x})}
\). From the definition of \(T\) in \eqref{eq:T} and simple algebraic manipulations, it follows that
\begin{eqnarray}\label{formula}
    C(x,\delta) &=& \displaystyle\frac{\sinh (\kappa d(x, \bar{x}))}{\sqrt{\sinh (\kappa d(x, \bar{x}))\sinh (\kappa ||w||)}} \cdot \sqrt{\displaystyle\frac{||w||}{d(x, \bar{x})}} \cdot \sqrt{N+ \cosh (\kappa\delta)}.
    \end{eqnarray}
where  $N:= \displaystyle\frac{d(x, \bar{x})}{||w||} \sinh (\kappa d(x, \bar{x}))\sinh (\kappa ||w||) - \cosh (\kappa d(x, \bar{x}))\cosh (\kappa ||w||)$. On the other hand, note that 
\begin{small}
\begin{align*}
2N &= \frac{d(x,\bar{x})}{\|w\|}\big[\cosh(a+b)-\cosh(a-b)\big]
- \big[\cosh(a+b)+\cosh(a-b)\big] \\
&= \frac{(d(x,\bar{x})-\|w\|)\cosh(a+b)-(d(x,\bar{x})+\|w\|)\cosh(a-b)}{\|w\|},
\end{align*}
\end{small}
with \(a=\kappa d(x,\bar{x})\), \(b=\kappa\|w\|\). 
Letting $\xi = a + b$ and $\eta = a - b$, we obtain
\[
N = \frac{\eta \cosh(\xi) - \xi \cosh(\eta)}{\xi - \eta}.
\]
From the power series expansion of the hyperbolic cosine function we have
$$N = -1 + \displaystyle\frac{\eta\xi}{\xi - \eta}\Big(\displaystyle\sum_{n=1}^{\infty} \displaystyle\frac{\xi^{2n-1}}{(2n)!} - \displaystyle\frac{\eta^{2n-1}}{(2n)!}\Big).$$
Observe that $\xi > \eta > 0$ because $d(x,\bar{x}) > ||w||$. In particular, $N > -1$. From (\ref{formula}), it follows that
$$   C(x,\delta) > \displaystyle\frac{\sinh (\kappa d(x,\bar{x}))}{\sqrt{\sinh (\kappa d(x,\bar{x}))\sinh (\kappa ||w||)}} \cdot \sqrt{\displaystyle\frac{||w||}{d(x,\bar{x})}} \cdot \sqrt{\cosh (\kappa\delta) -1}.              $$
By hypothesis $d(x,\bar{x}) \geq 2\delta$. Using again  \eqref{ImpIneq1},
it follows that $||w|| \geq d(x,\bar{x}) - \delta$. Hence, one has $||w|| \geq d(x,\bar{x})/2$. On the other hand, $d(x,\bar{x}) > ||w||$. From the above inequality, we have
$$C(x,\delta) > \sqrt{\displaystyle\frac{\cosh (\kappa\delta) - 1}{2}}=\sinh\left(\kappa\delta/2\right).$$
Hence, the desired result follows from \eqref{ineq:2001}.
\end{proof}
\begin{figure}[h]
\centering


\resizebox{\textwidth}{!}{%

\begin{tikzpicture}[
    >=Latex,
    every node/.style={font=\normalsize},
    bluevec/.style={->, thick, blue!60!black},
    redvec/.style={->, thick, dashed, red!80!black},
    geod/.style={dashed, gray!80!black, thick},
    ballouter/.style={
        draw=green!50!black,
        dashed,
        thick,
        fill=green!20,
        fill opacity=0.3
    },
    ballinner/.style={
        draw=green!50!black,
        thick,
        fill=green!30,
        fill opacity=0.4
    }
]

%


\coordinate (xb) at (-2.5, -0.5);
\coordinate (x_global) at (3.5, 0);
\coordinate (z) at (1.5, -2.5);

\def\gvecX{2.5}
\def\gvecY{1.2}

\def\svecX{-2.5}
\def\svecY{-1.2}


\path[
    drop shadow={
        shadow xshift=3mm,
        shadow yshift=-3mm,
        fill=black,
        opacity=0.3
    },
    top color=white,
    bottom color=blue!15!gray!30,
    middle color=blue!5!white,
    draw=gray!50,
    thick
]
plot[
    smooth cycle,
    tension=0.7
]
coordinates {
    (-7.0, 3.5)
    (-2.0, 4.5)
    (4.0, 4.0)
    (7.5, 1.5)
    (6.0, -1.5)
    (2.0, -4.5)
    (-1.0, -4.8)
    (-5.0, -3.5)
    (-8.0, -0.5)
};

\node[font=\Large] at (-6,3.5) {$M$};


\begin{scope}[shift={(x_global)}]

\path[
    fill=blue!50,
    draw=blue!80,
    thick,
    opacity=0.9
]
(-3.0,-1.8)
-- (3.0,-0.8)
-- (4.0,1.0)
-- (-2.0,2.0)
-- cycle;

\node[
    blue!60!black,
    anchor=south
]
at (1.0,2.0)
{$T_x M$};


\draw[
    bluevec,
    very thick
]
(0,0) -- (\gvecX,\gvecY)
node[
    pos=0.6,
    above right=-2pt and -1pt,
    blue!60!black,
    fill=white,
    fill opacity=0.7,
    text opacity=1
]
{$\dfrac{g}{\|g\|}$}

node[
    pos=1.05,
    right,
    black,
    fill=white,
    fill opacity=0.7,
    text opacity=1
]
{$g \in \partial f(x)$};


\coordinate (s_tip_relative_in_plane)
at (\svecX,\svecY);

\draw[
    redvec,
    very thick
]
(0,0) -- (s_tip_relative_in_plane)

node[
    pos=0.6,
    below left=-2pt and -2pt,
    red!80!black,
    fill=white,
    fill opacity=0.7,
    text opacity=1
]
{$s$};

\node[
    red!80!black,
    font=\scriptsize,
    anchor=north east,
    fill=white,
    fill opacity=0.7,
    text opacity=1
]
at (s_tip_relative_in_plane)
{$s=-\dfrac{g}{\|g\|}$};


\fill (0,0) circle (2.5pt);

\node[
    above right=3pt of {(0,0)}
]
{$x$};

\end{scope}


\coordinate (s_global_tip_for_geodesic)
at ($(x_global)+(\svecX,\svecY)$);


\draw[ballouter]
(xb) ellipse (2.5 and 2.0);

\node[
    green!30!black
]
at (-2.5,-2.5)
{$B[\bar{x},2\delta]$};

\draw[ballinner]
(xb) ellipse (1.5 and 1.3);

\node[
    green!30!black
]
at (-2.5,-0.9)
{$B[\bar{x},\delta]$};

\node
at (-2.5,-1.4)
{$f(u)<f(x)$};


\draw[geod]
(xb)
.. controls (-1.0,1.5) and (1.5,1.0)
.. (x_global);

\node[
    fill=blue!5!white,
    fill opacity=0.8,
    text opacity=1,
    inner sep=2pt
]
at (0.3,1.0)
{$d(x,\bar{x})\geq 2\delta$};

\draw[geod]
(xb)
.. controls (-0.5,0.0) and (0.5,-1.5)
.. (z);

\node[
    fill=white,
    fill opacity=0.7,
    text opacity=1,
    inner sep=2pt
]
at (0.0,-0.9)
{$d(z,\bar{x})$};


\fill (xb) circle (2.5pt);

\node[
    left=3pt of xb,
    font=\large
]
{$\bar{x}$};

\fill (z) circle (2.5pt);

\node[
    below=3pt of z,
    yshift=-0.2cm
]
{$z=\exp_x(\lambda s)$};


\draw[
    ->,
    red!80!black,
    line width=1.2pt
]
(x_global)
.. controls ($(x_global)+(\svecX*0.4,\svecY*0.4)$)
and (2.0,-1.5)
.. (z);


\node[
    draw=gray!70,
    rounded corners=4pt,
    fill=white,
    anchor=south east,
    inner sep=5pt,
    drop shadow={
        shadow xshift=1mm,
        shadow yshift=-1mm,
        opacity=0.1
    }
]
at (8.5,-4.8)
{
\begin{tikzpicture}[
    scale=0.8,
    transform shape,
    every node/.style={
        font=\small,
        anchor=west
    }
]

\path[
    fill=blue!10,
    draw=gray!50
]
(0,0)
.. controls (0.2,0.2) and (0.4,-0.1)
.. (0.6,0)
.. controls (0.4,-0.2)
.. (0,0);

\node at (0.8,0)
{Manifold $M$};

\path[
    fill=blue!20,
    draw=blue!50,
    opacity=0.6
]
(0,-0.6)
-- (0.2,-0.4)
-- (0.6,-0.5)
-- (0.4,-0.7)
-- cycle;

\node at (0.8,-0.55)
{Tangent plane $T_x M$};

\draw[
    ->,
    red!80!black,
    thick
]
(0,-1.1) -- (0.6,-1.1);

\node at (0.8,-1.1)
{Geodesic update};

\draw[
    dashed,
    gray!80!black,
    thick
]
(0,-1.65) -- (0.6,-1.65);

\node at (0.8,-1.65)
{Riemannian distance};

\draw[
    draw=green!50!black,
    thick,
    fill=green!30,
    fill opacity=0.4
]
(0.3,-2.2) circle (0.15);

\node at (0.8,-2.2)
{Ball $B[\bar{x},\delta]$};

\draw[
    draw=green!50!black,
    dashed,
    thick,
    fill=green!20,
    fill opacity=0.3
]
(0.3,-2.8) circle (0.15);

\node at (0.8,-2.8)
{Ball $B[\bar{x},2\delta]$};

\end{tikzpicture}
};

\end{tikzpicture}%

} 

\caption{
Geometric interpretation on a Riemannian manifold.
}
\label{fig:Geometric}
\end{figure}

The  Figure \ref{fig:Geometric} represents the contraction property of the subgradient method on a Riemannian manifold $M$. It shows the geometric construction of a single Riemannian subgradient step, which we will formalize in the next section: starting from a point $x \in M$, a subgradient $g \in \partial f(x)$ is computed in the tangent space $T_xM$, the normalized descent direction is given by $s = -g/\|g\|$, and the next iterate is obtained as $z = \exp_x(\lambda s)$ along the geodesic determined by $s$. The point $\bar{x}$ is surrounded by a ball $B[\bar{x},\delta]$ on which $f(u) < f(x)$ for all $u \in B[\bar{x},\delta]$, and the condition $d(x,\bar{x}) \ge 2\delta$ ensures that $x$ lies outside this neighborhood. The figure highlights how the update $z$ moves along the manifold towards the region of lower function values around $\bar{x}$, illustrating the underlying contraction behavior encoded in the hyperbolic inequality for $d(z,\bar{x})$.

Throughout the remainder of the paper, unless otherwise stated, \(M\) denotes an \(n\)-dimensional Hadamard manifold with sectional curvature \(\mathrm{sec}_M \geq -\kappa^2\), and \(f\colon M \to \mathbb{R}\) denotes a convex function. With these assumptions in place, we proceed to develop the main results.

\section{Subgradient Method}\label{sec:Method}

In this section, we consider the nonsmooth optimization problem
\[
\min_{x\in M} f(x),
\]
and, motivated by recent developments and by Shepilov's classical Euclidean analysis, we study the convergence of the subgradient method on complete Riemannian manifolds with sectional curvature bounded from below. For brevity, detailed assumptions on the step-size sequence are omitted here and stated when needed.

Throughout this section, we use the following notation: \( f^{*} := \inf_{x \in M} f(x) \), \( S := \{ x \in M : f(x) = f^{*} \} \), \( B[\bar{x},\varepsilon] := \{ x \in M : d(x,\bar{x}) \leq \varepsilon \} \), \( [f \leq a] := \{ x \in M : f(x) \leq a \} \), and \( [f = a] := \{ x \in M : f(x) = a \} \).
\subsection{Method}
To define the subgradient method for minimizing $f$ on $M$, let $\{\lambda_k\}$ be a sequence of positive step sizes. The method then generates an iterative sequence $\{x^k\}$ in $M$ as follows:

\hspace{0.5cm}

\noindent\textbf{Subgradient Method (SM)}
\\[0.5ex]
\noindent\textbf{Step 0:} Choose $x^{0} \in M$ and set $k = 0$. \\[0.5ex]
\textbf{Step 1:} For the current iterate $x^{k} \in M$, compute a subgradient $g^{k} \in \partial f(x^{k})$. \\[0.5ex]
\textbf{Step 2:} If $g^{k} = 0$, \textbf{STOP}. Otherwise, define the next iterate $x^{k+1} \in M$ by
\begin{equation}\label{method}
x^{k+1} := \exp_{x^{k}} \bigl(\lambda_{k} s^{k}\bigr),
\qquad s^{k} := -\frac{g^{k}}{\|g^{k}\|}.
\end{equation}
\\[0.5ex]
\textbf{Step 2:} Update $k \gets k+1$ and return to Step 1.\\[0.5ex]

Since $f$ is a convex function, the subgradient method is well-defined. Moreover, from inequality (\ref{subineq}), it follows that 
$g^k=0$ if and only if $x^k \in S$. If there exists an iteration index  ${k}_0$ such that $g^{{k}_0}=0$, then the Subgradient Method is said to be finite. Otherwise, it is said to be infinite.
\subsection{Convergence analysis}

In this section we establish convergence results under any of the following conditions: the iterates \(\{x^k\}\) are bounded; the solution set \(S\) is bounded; \(\sum_k \lambda_k^2<\infty\); or \(S\) has nonempty interior. The choice of each case will be specified wherever required.

The following proposition is an immediate consequence of Theorem \ref{thm:key} and it plays a central role in establishing the method's convergence properties. 
\begin{proposition}\label{prop:propX} 
Given $k \in \mathbb{N}$ and $\overline{x} \in M$, assume that there exists $\delta > 0$ such that 
$f(u) < f(x^k)$ for all $u \in B[\overline{x}, \delta]$ and 
$d(x^k, \overline{x}) \geq 2\delta$. Then, for this $k$, one has
\begin{equation}\label{eq:cdelta}
\cosh (\kappa d_{k+1}) \leq  \cosh (\kappa d_k)\cosh (\kappa \lambda_k) - \sinh (\kappa \lambda_k)\sinh\left(\kappa\delta/2\right),
\end{equation}
    \begin{equation}\label{eq:cdelta2026}
    \frac{\cosh(\kappa d_{k+1}) - \cosh(\kappa d_{k})}{\sinh(\kappa \lambda_{k})} 
    \leq \cosh(\kappa d_{k}) \tanh\left(\kappa\lambda_k/2\right) - \sinh\left(\kappa\delta/2\right),
    \end{equation}
where $d_{k+1} = d(x^{k+1}, \overline{x})$, $d_{k} = d(x^{k}, \overline{x})$ and $\lambda_{k} = d(x^{k+1}, x^{k})$.   
\end{proposition}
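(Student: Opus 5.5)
Both inequalities follow from Theorem \ref{thm:key}: the first by a direct application, the second by elementary rearrangement.

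\textbf{First inequality.} I apply Theorem \ref{thm:key} with $x:=x^k$, $\bar{x}:=\overline{x}$, $g:=g^k\in\partial f(x^k)$, $s:=s^k=-g^k/\|g^k\|$ and $\lambda:=\lambda_k$.

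First I check that $g^k\neq 0$, so that $s^k$ and $x^{k+1}$ are defined. If $g^k=0$, then $x^k\in S$. But $f(u)<f(x^k)$ for every $u\in B[\overline{x},\delta]$, so $x^k$ is not a minimizer, a contradiction.

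The hypotheses of the theorem, namely $f(u)<f(x)$ on $B[\bar{x},\delta]$ and $d(x,\bar{x})\ge 2\delta$, are exactly those assumed here. The theorem therefore gives
\[
\cosh(\kappa d_{k+1})\le \cosh(\kappa d_k)\cosh(\kappa d(x^{k+1},x^k))-\sinh(\kappa d(x^{k+1},x^k))\sinh(\kappa\delta/2).
\]
On a Hadamard manifold geodesics are minimizing and $\|s^k\|=1$, so $d(x^{k+1},x^k)=\lambda_k$. This is consistent with the statement's identification $\lambda_k=d(x^{k+1},x^k)$, and substituting it yields \eqref{eq:cdelta}.

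\textbf{Second inequality.} I subtract $\cosh(\kappa d_k)$ from both sides of \eqref{eq:cdelta}. Since $\lambda_k>0$, I may divide by $\sinh(\kappa\lambda_k)>0$, which gives
\[
\frac{\cosh(\kappa d_{k+1})-\cosh(\kappa d_k)}{\sinh(\kappa\lambda_k)}\le \cosh(\kappa d_k)\,\frac{\cosh(\kappa\lambda_k)-1}{\sinh(\kappa\lambda_k)}-\sinh(\kappa\delta/2).
\]
It remains to recall the half-angle identity
\[
\frac{\cosh t-1}{\sinh t}=\frac{2\sinh^2(t/2)}{2\sinh(t/2)\cosh(t/2)}=\tanh(t/2),
\]
and apply it with $t=\kappa\lambda_k$. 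This gives \eqref{eq:cdelta2026}.

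Essentially all the content lies in Theorem \ref{thm:key}; beyond it the argument uses only the identification $d(x^{k+1},x^k)=\lambda_k$ and the half-angle identity.
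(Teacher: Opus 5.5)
Your proposal is correct and follows the paper's proof: apply Theorem~\ref{thm:key} with $x=x^k$, $z=x^{k+1}$, $\lambda=\lambda_k$, then subtract $\cosh(\kappa d_k)$, divide by $\sinh(\kappa\lambda_k)$ and use $\tanh(t/2)=(\cosh t-1)/\sinh t$. You also verify that $g^k\neq 0$ and that $d(x^{k+1},x^k)=\lambda_k$, which the paper leaves implicit.
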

\begin{proof}
The inequality in \eqref{eq:cdelta} is an immediate consequence of Theorem \ref{thm:key} by taking $z = x^{k+1}$, $x = x^k$, and $\lambda = \lambda_k$. 
From \eqref{eq:cdelta}, it follows that
\[
\cosh (\kappa d_{k+1}) - \cosh (\kappa d_{k}) 
\leq \cosh (\kappa d_{k})\big(\cosh (\kappa\lambda_k) - 1\big) 
- \sinh (\kappa\lambda_k)\sinh\!\left(\kappa\delta/2\right).
\]
Hence, equation \eqref{eq:cdelta2026} follows by dividing both sides by $\sinh(\kappa\lambda_k)$ and using the identity $\tanh(t/2) = (\cosh t - 1)/\sinh t$, which completes the proof.
\end{proof}
In the next two results, we assume that the sequence $\{\lambda_k\}$ satisfies  $\displaystyle\lim_{k \to \infty} \lambda_k = 0$.
\begin{corollary}\label{cor:desl1}
Let $\overline{x} \in M$, $\delta > 0$ and suppose that there exists $k_0 \in \mathbb{N}$ such that for every $k \ge k_0$,
$f(u) < f(x^k)$ for all $u \in B[\overline{x}, \delta]$ and 
$d(x^k, \overline{x}) \geq 2\delta$. If the sequence $\{x^k \}$ is bounded, then there exists $\overline{k} \in \mathbb{N}$ such that
   \[
    \cosh(\kappa d_{m+1}) \leq \cosh(\kappa d_{l}) - \sinh\left(\kappa\delta/2\right)\,\frac{\kappa}{4} \sum_{j=l}^{m} \lambda_j, 
    \quad  m > l \geq \overline{k},
    \]
 where $d_{j} = d(x^{j}, \overline{x})$ for each $j \in \mathbb{N}$.
\end{corollary}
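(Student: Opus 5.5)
We will prove Corollary \ref{cor:desl1} by applying Proposition \ref{prop:propX} at every index $k\ge k_0$, which is allowed by the hypotheses. Then we telescope the per-step inequality \eqref{eq:cdelta2026}. Boundedness of $\{x^k\}$ and $\lambda_k\to0$ are used to make the term $\cosh(\kappa d_k)\tanh(\kappa\lambda_k/2)$ negligible compared with $\sinh(\kappa\delta/2)$.

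First, since $\{x^k\}$ is bounded, there is $D>0$ with $d_k=d(x^k,\overline{x})\le D$ for all $k$. Hence $\cosh(\kappa d_k)\le\cosh(\kappa D)$. Since $\lambda_k\to0$, we can choose $\overline{k}\ge k_0$ such that for all $k\ge\overline{k}$ two conditions hold:
\[
\cosh(\kappa D)\tanh(\kappa\lambda_k/2)\le \tfrac12\sinh(\kappa\delta/2) \quad\text{and}\quad \kappa\lambda_k\le 1 .
\]
Plugging the first condition into \eqref{eq:cdelta2026} gives, for every $k\ge\overline{k}$,
\[
\cosh(\kappa d_{k+1})-\cosh(\kappa d_k)\le -\tfrac12\sinh(\kappa\delta/2)\,\sinh(\kappa\lambda_k).
\]
Next we use $\sinh t\ge t$ for $t\ge0$, so that $\sinh(\kappa\lambda_k)\ge\kappa\lambda_k$. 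This already gives the per-step bound
\[
\cosh(\kappa d_{k+1})\le\cosh(\kappa d_k)-\tfrac{\kappa}{2}\sinh(\kappa\delta/2)\lambda_k ,
\]
which is stronger than the needed constant $\kappa/4$. Summing from $j=l$ to $j=m$ telescopes to the claimed inequality for all $m>l\ge\overline{k}$. The weaker constant $1/4$ leaves room to absorb slack. For example, the smallness condition could be stated with $3/4$ instead of $1/2$, or we could use $\sinh t\ge t/2$ on bounded ranges, if some estimate turns out less clean.

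The one point that needs care is the quantity $\lambda_k$ in Proposition \ref{prop:propX}, which is written as $d(x^{k+1},x^k)$. Since $x^{k+1}=\exp_{x^k}(\lambda_k s^k)$ with $\|s^k\|=1$ and $M$ is Hadamard, this distance equals the step size $\lambda_k$, so the two readings agree. Also, the method must not have stopped: the hypothesis $f(u)<f(x^k)$ with $u$ near $\overline{x}$ forces $x^k\notin S$, so $g^k\neq0$ and $x^{k+1}$ is defined. Beyond this, the main obstacle is only choosing $\overline{k}$ uniformly. This works because the bound on $\cosh(\kappa d_k)$ comes from boundedness of the whole sequence, not from the index, so the proof is essentially routine.
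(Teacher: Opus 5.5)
Your proof is correct and follows essentially the same route as the paper. Both arguments apply Proposition~\ref{prop:propX} for $k\ge k_0$, bound $\cosh(\kappa d_k)$ uniformly using boundedness, use $\lambda_k\to 0$ so the curvature error term is dominated by $\sinh(\kappa\delta/2)$, and then telescope. The only difference is cosmetic: you work from \eqref{eq:cdelta2026} with the global bound $\sinh t\ge t$, which gives the constant $\kappa/2$, whereas the paper works from \eqref{eq:cdelta} with the small-$t$ estimates $\cosh t-1<t^2$, $\sinh t>t/2$ and the extra smallness condition $\lambda_k<\sinh(\kappa\delta/2)/(4\kappa B)$.
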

\begin{proof}
 Since the sequence $\{x^k \}$ is bounded, there exists $B > 0$ such that $\cosh (\kappa d_k)  \leq B$ for all $k$. From \eqref{eq:cdelta}, it follows that for all $k\geq k_0$,
\begin{equation}\label{eq:cdelta1}
 \cosh (\kappa d_{k+1})\leq  \cosh (\kappa d_{k}) +B(\cosh (\kappa\lambda_k) - 1) - \sinh (\kappa\lambda_k)\sinh\left(\kappa\delta/2\right),  
\end{equation}
Using  \eqref{eq:cdelta1} for $m >l\geq k_0$, we have
\begin{equation}\label{minus}
\cosh (\kappa d_{m+1}) \leq \cosh (\kappa d_l) + B \displaystyle\sum_{j=l}^m    (\cosh (\kappa\lambda_j) - 1)  - \sinh\left(\kappa\delta/2\right)\displaystyle\sum_{j=l}^m    \sinh (\kappa\lambda_j).  
\end{equation}
On the other hand, $\displaystyle\lim_{t \to 0}\displaystyle\frac{\cosh t -1}{t^2} = 1/2$, $\displaystyle\lim_{t \to 0}\displaystyle\frac{\sinh t }{t} = 1$ and \( \displaystyle\lim_{k \to \infty} \lambda_k = 0 \). Hence, there exists $k_1 \in \mathbb{N}$ such that for $k \geq k_1$
$$ \cosh (\kappa\lambda_k) -1 < \kappa^2\lambda_{k}^2,\quad \sinh(\kappa\lambda_k) > \displaystyle\frac{\kappa\lambda_k}{2},\quad \lambda_k< \displaystyle\frac{\sinh\left(\kappa\delta/2\right)}{4\kappa B}.
$$ 
From (\ref{minus}) it follows that for $m> l \geq \bar{k}:=\max\{k_0,k_1\}$,
\begin{eqnarray*}
\cosh (\kappa d_{m+1}) &\leq& \cosh (\kappa d_l) - \displaystyle\sum_{j=l}^m   \lambda_j\left( \displaystyle\frac{\sinh\left(\kappa\delta/2\right)\kappa}{2} - B\kappa^2\lambda_j        \right),\\
&\leq& \cosh (\kappa d_l) -\displaystyle\frac{\sinh\left(\kappa\delta/2\right)\kappa }{4}\displaystyle\sum_{j=l}^m   \lambda_j, \\
\end{eqnarray*}
which completes the proof.
\end{proof}

\begin{lemma}\label{lem:limitada}
Let $\bar{x} \in M$, and let $\delta > 0$ be such that $f(u) < f(x^k)$ for every $u \in B[\bar{x}, \delta]$ and $d_k = d(x^k, \bar{x}) \geq 2\delta$ for all $k \in \mathbb{N}$. If the sequence $\{x^{k}\}$ has an accumulation point, then it is bounded.
\end{lemma}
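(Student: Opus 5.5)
We use Proposition \ref{prop:propX}, which applies at every $k$ because the hypotheses of the lemma hold for all $k\in\mathbb{N}$. In particular \eqref{eq:cdelta2026} gives, for every $k$,
\[
\cosh(\kappa d_{k+1})-\cosh(\kappa d_k)\le \sinh(\kappa\lambda_k)\Bigl[\cosh(\kappa d_k)\tanh(\kappa\lambda_k/2)-\sinh(\kappa\delta/2)\Bigr].
\]
So $d_{k+1}<d_k$ whenever $\cosh(\kappa d_k)\tanh(\kappa\lambda_k/2)<\sinh(\kappa\delta/2)$. Since $\lambda_k\to 0$ is assumed in this part of the paper, for each fixed level $R$ there is an index $k_R$ such that every $k\ge k_R$ with $d_k\le R$ satisfies this condition, and therefore $d_{k+1}< d_k$.

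We argue by contradiction. Let $\hat x$ be an accumulation point and suppose $\{x^k\}$ is unbounded. Set $r:=d(\hat x,\bar x)+1$ and $R:=r+1$. Infinitely many iterates lie in $B[\bar x,r]$, namely those near $\hat x$, and infinitely many lie outside $B[\bar x,R]$, by unboundedness. Choose $k_1\ge k_R$ with $d_{k_1}\le r$ and $\lambda_k\le 1$ for all $k\ge k_1$. Let $m>k_1$ be the first index with $d_m>R$. For $k_1\le k<m$ we have $d_k\le R$, so $d_{k+1}<d_k$ by the first paragraph, and by induction $d_m<d_{m-1}<\dots<d_{k_1}\le r<R$. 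This contradicts $d_m>R$, so the sequence is bounded.

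The argument really only needs the one-step monotonicity on $[d\le R]$ for large $k$. The escape step from $B[\bar x,R]$ is actually excluded directly, since $d_{m-1}\le R$ already forces $d_m<d_{m-1}$; the bound $\lambda_k\le1$ is therefore only a safety margin.

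The main obstacle is making sure the threshold $k_R$ is uniform over all iterates in $B[\bar x,R]$. It is, because the bracket depends only on $d_k\le R$ and on $\lambda_k$, and $\sup_{k\ge k_R}\tanh(\kappa\lambda_k/2)\cosh(\kappa R)<\sinh(\kappa\delta/2)$ holds for $k_R$ large. We also use that $\sinh(\kappa\lambda_k)>0$, which holds because $\lambda_k>0$; if the method stops finitely, the sequence is trivially bounded.
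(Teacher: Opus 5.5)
Your proof is correct and follows the same mechanism as the paper. Inequality \eqref{eq:cdelta2026} together with $\lambda_k\to 0$ gives one-step monotonicity of $d_k$ on a fixed ball around $\bar{x}$ for all large $k$, and an iterate of the convergent subsequence inside that ball starts the induction. The paper arranges this through the last index $r<s$ with $d_r\le A$ and uses $\lambda_r<1$ to bound $d_{r+1}$. Your first-exit argument rightly notes that this detour is unnecessary, because monotonicity already applies at the index inside the ball, and it even yields that $d_k$ is eventually decreasing.
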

\begin{proof}
Let $\{x^{k_j}\}$ be a convergent subsequence of $\{x^{k}\}$. Therefore, there exists $A > 0$ such that $d_{k_j} \leq A$ for all $j \in \mathbb{N}$. Set $\bar{A} := A+1$. Since $\lambda_{k} \to 0$ as $k \to \infty$, it follows that$$\lim_{k \to \infty} \cosh(\kappa \bar{A}) \tanh(\kappa \lambda_{k}/2) = 0.$$ Hence, there exists $m \in \mathbb{N}$ such that for all $k \geq m$, we have
\begin{equation}\label{desi}
 \begin{cases}
        \lambda_k < 1, \\
    \displaystyle \cosh(\kappa \bar{A}) \tanh\left(\kappa \lambda_{k}/2\right)-\sinh\left(\kappa\delta/2\right)\leq 0.
\end{cases}
\end{equation}
Now, take $k_j > m$ and $s\geq k_{{j}}$. We have two possibilities:
\begin{itemize}
    \item[(1)] $d_s \leq A < \bar{A} $,
    \item [(2)] $d_s > A $.
\end{itemize}
If case (2) holds, define $r:=\max\{n:~n<s~\mbox{and}~ d_n \le A\}$. Note that $r$ is well-defined, $r\geq k_{{j}}$, and observe that 
$$d_{r+1}\leq d(x^{r+1},x^r)+ d_r= \lambda_r+ d_r <1+A= \bar{A},$$
where the last inequality follows for combining the definition of $r$ with $\eqref{desi}$.
Combining the inequality \eqref{eq:cdelta2026} with (\ref{desi}), it follows that $d_{r+2} \leq d_{r+1
}< \bar{A}$.  Proceeding in a similar way, it follows that
$$d_{s} \leq d_{s-1}\leq \cdots \leq d_{r+2} \leq d_{r+1} \leq \bar{A}.
$$
Therefore, $d_s \le \bar{A}$, for any $s \ge k_j$, from which the conclusion of the proof follows.
\end{proof}
From this point onward, for our convergence analysis, we assume that the sequence $\{\lambda_k\}$ satisfies:
\begin{equation*}
\displaystyle\lim_{k \to \infty} \lambda_k = 0 \quad \text{and} \quad \displaystyle\sum_{k=1}^{\infty} \lambda_k = \infty.
\end{equation*}
The following result provides a condition under which the method has finite termination.
\begin{theorem}\label{interior}
Assume that $\text{int}(S)\neq \emptyset$ (the set $S$ has nonempty interior). If the sequence $\{x^{k}\}$ is bounded, then the subgradient method is finite.
\end{theorem}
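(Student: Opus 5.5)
We argue by contradiction: suppose the method is infinite, so $g^k\neq 0$ and hence $x^k\notin S$, i.e.\ $f(x^k)>f^*$ for all $k$. Since $\operatorname{int}(S)\neq\emptyset$, we fix $\bar{x}\in\operatorname{int}(S)$ and $\delta_0>0$ with $B[\bar{x},\delta_0]\subset S$. Then for every $u\in B[\bar{x},\delta_0]$ and every $k$ we have $f(u)=f^*<f(x^k)$, so the first hypothesis of Corollary \ref{cor:desl1} holds for all $k$, with any $\delta\le\delta_0$.

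Next we secure the distance condition $d(x^k,\bar{x})\ge 2\delta$. We take $\delta:=\delta_0/2$. For each $k$, either $d(x^k,\bar{x})\ge\delta_0=2\delta$, or $x^k\in B[\bar{x},\delta_0]\subset S$. The latter is impossible because $x^k\notin S$. Hence $d_k:=d(x^k,\bar{x})\ge 2\delta$ for every $k$, and we may take $k_0=0$.

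Now we apply Corollary \ref{cor:desl1}, using that $\{x^k\}$ is bounded and $\lambda_k\to0$. It yields $\bar{k}$ such that, for all $m>l\ge\bar{k}$,
\[
\cosh(\kappa d_{m+1})\le \cosh(\kappa d_l)-\sinh(\kappa\delta/2)\,\frac{\kappa}{4}\sum_{j=l}^{m}\lambda_j .
\]
Fix $l=\bar{k}$ and let $m\to\infty$. Since $\sum_j\lambda_j=\infty$, the right-hand side tends to $-\infty$, while the left-hand side is at least $1$. This contradiction shows that some $g^{k}=0$, i.e.\ the method is finite.

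The only delicate step is verifying $d_k\ge2\delta$ for all $k$. A priori an iterate might come close to $\bar{x}$, but that would place it in the interior ball and hence in $S$, which contradicts infiniteness. Choosing $\delta=\delta_0/2$ handles this cleanly. Note that strict inequality $f(u)<f(x^k)$ holds precisely because $x^k\notin S$.
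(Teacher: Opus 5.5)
Your proof is correct and follows the paper's argument essentially verbatim. The paper likewise picks $x^*\in\operatorname{int}(S)$ with $B[x^*,2\delta]\subset S$ (your $\delta=\delta_0/2$), notes that $x^k\notin S$ forces $d(x^k,x^*)\ge 2\delta$ and $f(u)<f(x^k)$ on $B[x^*,\delta]$, and then lets $m\to\infty$ in Corollary~\ref{cor:desl1} to contradict $\sum_j\lambda_j=\infty$. Your observation that $\cosh(\kappa d_{m+1})\ge 1$ is a slightly cleaner way to state the final contradiction than the paper's appeal to boundedness of $\{x^k\}$; in your argument, boundedness is needed only to invoke Corollary~\ref{cor:desl1}.
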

\begin{proof} Let us suppose that the method is not finite, i.e., $x^k \notin S$ for all $k$. Choose $x^* \in \text{int}(S)$ and $\delta >0$ such that $ B[x^*,2\delta] \subset S$. Consequently, $f(u) < f(x^k)$ for all $u \in B[x^*,\delta]$ and $d(x^k, x^*) \geq 2\delta$ for all $k$. By Corollary \ref{cor:desl1}, there exists $\bar{k}$ such that, for all $m > l \geq \bar{k}$, we have
 \[
    \cosh(\kappa d_{m+1}) \leq \cosh(\kappa d_{l}) - \sinh\left(\kappa\delta/2\right)\,\frac{\kappa}{4} \sum_{j=l}^{m} \lambda_j, 
    \quad m > l \geq \overline{k}.
    \]
Taking the limit as $m \to \infty$ yields a contradiction, since the sequence $\{x^k\}$ is bounded whereas
$\sum_{j=l}^{\infty} \lambda_j = \infty$. Thus, the proof is complete.
\end{proof}
From now on, we assume that the subgradient method (SM) does not terminate in a finite number of steps, i.e.,\ \(x^k\notin S\) for all \(k\). The next result shows, in particular, that every accumulation point of $\{x^k\}$ is a solution of the optimization problem. 

\begin{theorem}\label{thm:liminf}
Assume that the sequence $\{x^{k}\}$ is bounded. Then, the following statements hold:
\begin{itemize}
    \item[(i)]\label{item:inf} $\displaystyle \liminf_{k \to \infty} f(x^k) = f^{*};$
    \item[(ii)]\label{item:S} $S\neq\emptyset$ and $d(x^k,S) \to 0$.
\end{itemize}
\end{theorem}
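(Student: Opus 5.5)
Part (i) goes by contradiction using Corollary \ref{cor:desl1}, and part (ii) follows from (i) together with the boundedness and a small-step argument. Note that the statement does not assume $S\neq\emptyset$ a priori, so $f^*$ is only the infimum; boundedness will supply minimizers.

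For (i), suppose $\liminf_k f(x^k) > f^*$. Then there are $\epsilon>0$ and $k_0$ with $f(x^k)\ge f^*+2\epsilon$ for all $k\ge k_0$. Since $f^*$ is the infimum, pick $\hat x$ with $f(\hat x)< f^*+\epsilon$. By continuity of $f$ there is $\delta>0$ with $f(u)<f^*+2\epsilon\le f(x^k)$ for all $u\in B[\hat x,\delta]$ and $k\ge k_0$. The second hypothesis of Corollary \ref{cor:desl1} also holds: if $d(x^k,\hat x)<2\delta$ for some $k\ge k_0$, shrink $\delta$ (say replace by $\delta/2$). More carefully, any $u$ with $d(u,\hat x)\le \delta$ satisfies $f(u)<f(x^k)$, so $x^k\notin B[\hat x,\delta]$; hence $d(x^k,\hat x)>\delta$. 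Replacing $\delta$ by $\delta/2$ gives $d(x^k,\hat x)\ge 2(\delta/2)$, and the ball condition persists on the smaller ball. Corollary \ref{cor:desl1} then gives $\cosh(\kappa d_{m+1})\le \cosh(\kappa d_l)-\sinh(\kappa\delta/4)\frac{\kappa}{4}\sum_{j=l}^m\lambda_j$. Since $\sum\lambda_j=\infty$, the right side tends to $-\infty$, which contradicts $\cosh\ge1$. Hence (i) holds.

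For (ii), by (i) choose a subsequence with $f(x^{k_j})\to f^*$. By boundedness it has a convergent sub-subsequence with limit $\bar x$, and continuity gives $f(\bar x)=f^*$, so $S\neq\emptyset$. The same argument shows every accumulation point of $\{x^k\}$ with $f$-values tending to $f^*$ lies in $S$. To show $d(x^k,S)\to0$, suppose instead that there are $\epsilon>0$ and infinitely many $k$ with $d(x^k,S)\ge\epsilon$. The idea, following Shepilov, is to show that once an iterate is near $S$ it cannot drift far away, because far from $S$ the distance to a suitable point of $S$ decreases. I would use a compactness argument: set $K:=\overline{\{x^k\}}$, which is compact. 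On the compact set $\{x\in K: d(x,S)\ge \epsilon/2\}$, $f$ attains a minimum $f^*+\eta$ with $\eta>0$.

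The crossing argument then runs as follows. By (i) and the decay $\lambda_k\to0$, there are infinitely many "excursions": indices $l$ with $d(x^l,S)<\epsilon/2$, followed by a first later index $m$ with $d(x^m,S)\ge\epsilon$, and $d(x^k,S)\ge\epsilon/2$ for $l<k\le m$. Apply Proposition \ref{prop:propX} with $\bar x$ a nearest point of $S$ to $x^{l+1}$ (projections exist since $S$ is closed and convex in a Hadamard manifold). Take a small $\delta$, chosen uniformly, such that $f<f^*+\eta$ on $B[\bar x,\delta]$. Uniformity comes from uniform continuity of $f$ on a compact neighborhood of $K$, which is valid because $\bar x$ stays within a bounded region. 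Keeping $\bar x$ fixed along the excursion, the hypotheses of Proposition \ref{prop:propX} hold for $l<k\le m$: we have $f(x^k)\ge f^*+\eta$, and $d(x^k,\bar x)\ge d(x^k,S)\ge\epsilon/2\ge2\delta$. For large $l$, inequality \eqref{eq:cdelta2026} combined with $\lambda_k\to0$ and boundedness gives $d_{k+1}\le d_k$, exactly as in Lemma \ref{lem:limitada}. Therefore $d(x^m,S)\le d(x^m,\bar x)\le d(x^{l+1},\bar x)=d(x^{l+1},S)\le \epsilon/2+\lambda_l<\epsilon$ for large $l$. This contradicts the choice of $m$.

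The main obstacle is this last step: making $\delta$ and the threshold index uniform over the varying anchor points $\bar x$. The constant $B$ bounding $\cosh(\kappa d_k)$ must be uniform, which holds because both $\{x^k\}$ and the chosen projections lie in a fixed bounded set. The threshold $k$ from \eqref{desi} must also be independent of $\bar x$, and I would ensure this by taking the bound $\bar A$ to be the diameter of that bounded set.
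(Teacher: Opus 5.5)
Your proposal is correct and follows essentially the same route as the paper: part (i) is proved by contradiction via Corollary~\ref{cor:desl1}, and part (ii) by a last-visit argument anchored at a nearest point of the solution set. In that argument $\delta$ is made uniform by compactness, and the monotone decrease $d_{k+1}\le d_k$ comes from \eqref{eq:cdelta2026}, just as in the paper.

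The differences are cosmetic. You get the gap $\eta$ by minimizing $f$ on $\{x\in\overline{\{x^k\}}: d(x,S)\ge\epsilon/2\}$ and project onto $S$ itself, arguing by contradiction. The paper instead picks a level $a$ with $[f\le a]\cap B[x^*,K]\subset C_\varepsilon$, projects onto the compact set $C=S\cap B[x^*,K]$, and argues directly.

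Two small points of precision. Take $l$ to be the \emph{last} index before $m$ with $d(x^l,S)<\epsilon/2$, so that $d(x^k,S)\ge\epsilon/2$ for $l<k\le m$ really holds. Also phrase (i) with a real number $\tilde f\in\bigl(f^*,\liminf_k f(x^k)\bigr)$ rather than $f^*+2\epsilon$, so the a priori possibility $f^*=-\infty$ is covered.
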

\begin{proof} For item $(i)$, suppose that there exists $\tilde{f}$ such that $f(x^k) > \tilde{f} > f^{*}$ for all $k$.
Consequently, there exists an element $\bar{x} \in M$ such that $f^*\leq f(\overline{x}) < \tilde{f}< f(x^k)$. 
By the continuity of $f$, there exists $\delta > 0$ such that  $f(u) < \tilde{f} < f(x^k)$ for all  $u \in B[\overline{x}, 2\delta]$. In particular, $d(x^k,\bar{x}) \geq 2\delta$ for all $k$. From  Corollary \ref{cor:desl1} it follows that there exist $\bar{k}$ such that for $m> l\geq \bar{k}$, we get
 \[
    \cosh(\kappa d_{m+1}) \leq \cosh(\kappa d_{l}) - \sinh\left(\kappa\delta/2\right)\,\frac{\kappa}{4} \sum_{j=l}^{m} \lambda_j, 
    \quad m > l \geq \overline{k}.
    \]
Letting $m \to \infty$, we obtain a contradiction, which concludes the proof of item $(i)$. To prove item $(ii)$, first note that from item $(i)$, we have $\liminf_{k \to \infty} f(x^{k}) = f^*$. Hence, there exists a subsequence $\{x^{k_j}\}$ such that $f(x^{k_j}) \to f^*$ as $j \to \infty$. Since $\{x^k\}$ is bounded, $\{x^{k_j}\}$ is also bounded. Thus, by passing to a further subsequence if necessary, we may assume that $x^{k_j} \to x^*$. By the continuity of $f$, we obtain
$$f({x}^*)= \lim_{j \to \infty} f(x^{k_j})= f^{*},$$
and hence $x^* \in S$. 

To prove that $d(x^k, S) \to 0$, take $K > 0$ such that $x^k \in B[x^*, K]$ for all $k \in \mathbb{N}$. Let $C:=S\cap B[x^*,K]$ and note that $C$ is a nonempty, compact and convex set. Given $\varepsilon >0$, define 
$$C_{\varepsilon}:=\left\{x \in M: d(x,C)< \frac{\varepsilon}{2}\right\}.$$
We claim that there exists $ a > f^*$ such that $[f\leq a]\cap B[x^*,K] \subset C_{\varepsilon} $. Indeed, if this were not the case, there would exist a sequence $\{y^k\}$ with $f(y^k) \to f^*$ such that $y^k \in B[x^*,K]$ and $\displaystyle d(y^k,C)\geq \frac{\varepsilon}{2}$. As  $\{y^k\}$ is bounded, it has an accumulation point $y^*$ such that  $y^* \in S\cap B[x^*,K]=C$. However, $\displaystyle d(y^*,C)\geq \frac{\varepsilon}{2}$, which is a contradiction, and the statement follows.

On the other hand, note that $C$ and $[f=a]\cap B[x^*, \overline{K}]$ are compact disjoint sets, where $\overline{K}:= K+\text{diam}(C)$ and $\text{diam}(C)$ denotes the diameter of the set $C$. Thus, there exists $\delta >0$ such that for all $\bar{x} \in C$ 
\begin{equation*}\label{eq:Bola}
B[\bar{x},2\delta]\cap\bigg([f=a]\cap B[x^*, \overline{K}]\bigg)=\emptyset.
\end{equation*}
We can assume that $2\delta <K$, from which we have $B[\bar{x},2\delta] \subset B[x^*,\overline{K}]$ because $x^{*} \in C$. Since the convexity of $f$ implies its continuity on $M$, it follows that
\begin{equation}\label{eq:conv}
B[\bar{x},2\delta] \subset [f < a],\qquad\bar{x} \in C.
\end{equation}
Since $f(x^{k_j}) \to f^*$  there exists $k_{\bar{j}}$ such that
\begin{equation}\label{Limita1}
 \begin{cases}
    f(x^{k_j}) \leq a & \text{if}~k_j \geq k_{\bar{j}},\\
    \lambda_s < \frac{\varepsilon}{2} & \text{if}~{s} \geq {k_{\bar{j}}}, \\
    \displaystyle \cosh(\kappa \overline{K})\tanh{\left(\kappa \lambda_{s}/2\right)}-\sinh\left(\kappa\delta/2\right)\leq 0 & \text{if}~{s} \geq  k_{\bar{j}}.
\end{cases}
\end{equation}
Let us study the terms of the sequence for $s\geq k_{\bar{j}}$. We have two possibilities:
\begin{itemize}
    \item[(1)] $x^s \in C_{\varepsilon}$,
    \item[(2)] $x^s \notin C_{\varepsilon}$.
\end{itemize}
If the case $(1)$ holds, we have $\displaystyle d(x^s, C)< \frac{\varepsilon}{2}$. Thus, $\displaystyle d(x^s,S) \leq d(x^s, C) <\frac{\varepsilon}{2}$.
On the other hand, if the case $(2)$ holds, let us define $r:=\max\{n:~n<s~\mbox{and}~x^n \in C_{\varepsilon}\}$. Note that $r$ is well-defined and $r\geq k_{\bar{j}}$, as well as $x^{k_{\bar{j}}} \in C_{\varepsilon}$ because $x^{k_{\bar{j}}} \in [f\leq a]\cap B[x^*,K] \subset C_{\varepsilon}$. Given that $x^{r+1}, \cdots, x^{s-1} \notin C_{\varepsilon}$ (this is a consequence of the definition of $r$), for $ k = r+1, \ldots, s-1$, one has $f(x^k)> a$.

Since $C$ is a compact and convex set, there exists a unique point $\bar{z} \in C$ such that $d(x^r,\bar{z})=d(x^r,C) < \displaystyle\frac{\epsilon}{2}$. From \eqref{eq:conv}, for $k = r+1, \ldots, s-1$, we have
\begin{equation}\label{Limita2a}
 \begin{cases}
    d(\bar{z}, x^{k}) >2\delta \\
   f(u) <a <f(x^k), & \text{for}~{u \in B[\bar{z},\delta]}. \\
     
\end{cases}
\end{equation}
Observe that, since $d(x^{r+1},x^{*}) \leq K$ and $d(x^{*},\bar{z}) \leq \operatorname{diam}(C)$, we obtain 
$$          d(x^{r+1},\bar{z}) \leq d(x^{r+1},x^{*}) + d(x^{*},\bar{z}) \le \overline{K}.                          $$
Hence, from \eqref{Limita2a} and \eqref{eq:cdelta2026} we have
\begin{eqnarray*}
    \frac{\cosh{(\kappa d(x^{r+2},\bar{z}) )}-\cosh{(\kappa d(x^{r+1},\bar{z}) )}}{\sinh{(\kappa \lambda_{r+1}})} 
    &\le& \cosh{(\kappa \overline{K})}\tanh\left(\kappa \lambda_{r+1}/2\right) -\sinh\left(\kappa\delta/2\right)\\
    &  \leq & 0,
\end{eqnarray*}
where the last inequality follows from \eqref{Limita1}. Thus $d(x^{r+2},\bar{z})\ \leq d(x^{r+1},\bar{z}) \le \overline{K}$. Proceeding in a similar way, it follows that
$$d(x^{s},\bar{z})\leq d(x^{s-1},\bar{z})\leq \cdots \leq d(x^{r+2},\bar{z}) \leq d(x^{r+1},\bar{z}).
$$
On the other hand, using the triangle inequality and \eqref{Limita1}, we have
\[
d(x^{r+1},\bar{z}) \leq d(x^{r+1},x^r) + d(x^{r},\bar{z})
\leq \lambda_r + \frac{\varepsilon}{2} < \varepsilon.
\]
Hence, $d(x^s,\bar{z}) < \varepsilon$ for all $s \geq k_{\bar{j}}$. Moreover, for all $s \geq k_{\bar{j}}$, it follows that
\[
d(x^s,S) \leq d(x^s,C) \leq d(x^{s}, \bar{z}) < \varepsilon.
\]
This completes the proof.
\end{proof}
\begin{remark}\;
\begin{itemize}
\item [(i)] Unlike the proofs in \cite{Ferreira03042019} and \cite{Wang2018}, which establish Theorem \ref{thm:liminf}(i) under the assumption that the sequence of stepsizes have square-summable and hence derive boundedness of $\{x^k\}$ from a quasi-Fejér convergence, in our approach the boundedness is instead obtained through Lemma \ref{lem:limitada}, and this is precisely where the accumulation-point hypothesis is used; see \cite[Theorem~4.2]{alber1997minimization} for a version in the linear setting, where the authors also establish the content of Theorem\ref{thm:liminf} by assuming directly that the sequence \(\{x^k\}\) is bounded instead of requiring the sequence of stepsizes to be square-summable.
\item [(ii)] 
It is possible to establish the asymptotic convergence of the method by assuming
\[
\Omega:=\{z\in M:\ f(z)\le \liminf_{k\to\infty} f(x_k)\}\neq\emptyset,
\]
instead of assuming that \(S\neq\emptyset\). However, from Theorem \ref{thm:liminf}(ii) it follows that the nonemptiness of \(\Omega\) is not an independent assumption: it is equivalent to the nonemptiness of the solution set \(S\), for example, under the assumption that the sequence of stepsizes is square-summable; see, for instance, \cite{alber1998projected, bento2018weighting,da2013subgradient};
\item [iii)] The Theorem \ref{thm:liminf}$(ii)$ has appeared in the linear setting, for example, in \cite{shepilov1976}, but under the additional assumption that $S\neq\emptyset$. 
\end{itemize}
\end{remark}

\begin{theorem}\label{thm:cluster}
Assume that $S$ is nonempty and bounded. If the sequence $\{x^k\}$ has an accumulation point,
then it is bounded.
\end{theorem}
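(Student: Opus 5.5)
We argue as in Lemma \ref{lem:limitada} and in the second half of the proof of Theorem \ref{thm:liminf}. The plan is to show that, after the iterates enter a fixed bounded region around a solution, they can never move far from it. The compactness of $S$ is used to produce a level $a>f^*$ and a radius $\delta$ that are uniform over $S$. Throughout, $\{x^{k_j}\}$ denotes a subsequence converging to some point $\hat x$; we fix $x^*\in S$ and take $A>0$ with $d(x^{k_j},x^*)\le A$ for all $j$.

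\textbf{Step 1: compactness.} Fix a level $a>f^*$. By Proposition \ref{prop:CSS}, the sublevel set $[f\le a]$ is compact; choose $R>0$ with $[f\le a]\subset B[x^*,R]$. The sets $S$ and $[f=a]$ are disjoint and compact, and $S\subset[f<a]$, which is open. Hence there is $\delta>0$ with $B[\bar x,2\delta]\subset[f<a]$ for every $\bar x\in S$. In particular, every $x$ with $f(x)>a$ satisfies $d(x,\bar x)>2\delta$ and $f(u)<a<f(x)$ for all $u\in B[\bar x,\delta]$. These are exactly the hypotheses of Proposition \ref{prop:propX} with center $\bar x=x^*$, and they hold uniformly over the indices where $f(x^k)>a$.

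\textbf{Step 2: uniform step control.} Set $\bar A:=\max\{A,R\}+1$. Since $\lambda_k\to0$, there is $m$ such that for all $k\ge m$ we have $\lambda_k<1$ and $\cosh(\kappa\bar A)\tanh(\kappa\lambda_k/2)-\sinh(\kappa\delta/2)\le0$. Now fix $k_j\ge m$ and $s\ge k_j$. If $d(x^s,x^*)\le \max\{A,R\}$ there is nothing to prove. Otherwise, let $r<s$ be the last index with $r\ge k_j$ and $d(x^r,x^*)\le\max\{A,R\}$; it exists because $d(x^{k_j},x^*)\le A$. For each $n=r+1,\dots,s$ we have $d(x^n,x^*)>R$, hence $x^n\notin[f\le a]$, i.e. $f(x^n)>a$, so Step 1 applies at $x^n$. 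Also
\[
d(x^{r+1},x^*)\le\lambda_r+d(x^r,x^*)<\bar A.
\]
Inequality \eqref{eq:cdelta2026} with $\bar x=x^*$ then gives $d(x^{r+2},x^*)\le d(x^{r+1},x^*)<\bar A$. Inducting as in Lemma \ref{lem:limitada}, we get $d(x^n,x^*)<\bar A$ for all $n=r+1,\dots,s$, since each such $x^n$ still lies in $B[x^*,\bar A]$, which keeps the bound $\cosh(\kappa d_n)\le\cosh(\kappa\bar A)$ valid. Thus $d(x^s,x^*)\le\bar A$ for all $s\ge k_j$, and the sequence is bounded.

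\textbf{Main obstacle.} The delicate point is Step 1: the hypothesis "$f(u)<f(x^k)$ on $B[\bar x,\delta]$ and $d(x^k,\bar x)\ge2\delta$" must hold on the whole excursion $r+1,\dots,s$. This is where the compactness of the sublevel sets (Proposition \ref{prop:CSS}) is essential. It converts "far from $x^*$" into "$f>a$", which in turn yields the strict separation from the ball around $x^*$. Without boundedness of $S$, an excursion could run along $S$ with small function values, and the contraction estimate \eqref{eq:cdelta2026} would not be available. A secondary check is that the same $\bar x=x^*$ works for all indices, which it does because we only need the single ball $B[x^*,\delta]$.
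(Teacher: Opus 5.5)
Your proof is correct and follows the paper's argument. It uses compactness of $[f\le a]$ via Proposition \ref{prop:CSS}, a ball $B[x^*,2\delta]\subset[f<a]$, and a last-entry index $r$ after which \eqref{eq:cdelta2026} forces $d(x^n,x^*)$ to be nonincreasing and to stay below $\bar A$. The only cosmetic difference is that you define $r$ by the distance threshold $d(x^r,x^*)\le\max\{A,R\}$ rather than by membership in $[f\le a]$, which spares you the paper's step of enlarging $a$ so that the convergent subsequence lies in $[f<a]$; as you note, choosing $\delta$ uniformly over $S$ is unnecessary.
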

\begin{proof}
Fix $\bar{x} \in S$ and choose $a \in \mathbb{R}$ with $a > f^*$. Since $\{x^k\}$ has an accumulation point, it possesses a convergent subsequence, which we denote by $\{x^{k_j}\}$. Thus, there exists a constant $K>0$ such that $d(x^{k_j},\bar{x})\le K$ for all $j\in\mathbb{N}$. Without loss of generality (and by adjusting the constant $a$ if necessary), we may assume that $x^{k_j}\in [f < a]$ for all $j\in\mathbb{N}$.
Since the convexity of $f$ implies its continuity on $M$, there exists $\delta>0$ such that
\begin{equation}\label{inclusao}
B[\bar{x}, 2\delta] \subset [f \leq a ].
\end{equation}
Since $S$ is compact, Proposition \ref{prop:CSS} implies that $[f \leq a]$ is compact. By increasing the constant $K$, if necessary, we may assume that
\begin{equation}\label{ineq:1011}
d(y, \bar{x}) \leq K, \quad  y \in [f \leq a ].
\end{equation}
Since $\lambda_k \to 0$, we have $\displaystyle \lim_{k \to \infty}\cosh(\kappa \overline{K})\tanh\left(\kappa \lambda_{k}/2\right)=0,$ where $\overline{K}:=K+1$. Consequently, there exists $k_0 \in \mathbb{N}$ such that, for all $k\geq k_0$, the following hold:
\begin{equation}\label{Limita2}
 \begin{cases}
        \lambda_k < 1 & \text{if}~{k} \geq k_0, \\
    \displaystyle \cosh(\kappa \overline{K})\tanh\left(\kappa \lambda_{k}/2\right)- \sinh\left(\kappa\delta/2\right)\leq 0 & \text{if}~{k} \geq k_0.
\end{cases}
\end{equation}
Now, take $k_j >k_0$ and $s\geq k_{{j}}$. We have two possibilities:
\begin{itemize}
    \item[(1)] $x^s \in [f \leq a] $,
    \item [(2)] $x^s \notin [f \leq a]$.
\end{itemize}
If (1) holds, then $d(x^s,\bar{x})\leq K$. On the other hand, if  (2) holds, define $r:=\max\{n:~n<s~\mbox{and}~x^n \in [f \leq a]\}$. Note that $r$ is well-defined, $r\geq k_{{j}}$ and $f(x^{i}) > a$ for $i=r+1, \ldots s$. Furthermore, the inclusion in \eqref{inclusao} implies that $d(x^i, \bar{x}) > 2\delta$ for $i=r+1, \ldots s$. Combining the triangle inequality with \eqref{ineq:1011} and \eqref{Limita2}, we obtain
$$d(x^{r+1},\bar{x})\leq d(x^{r+1},x^r)+ d(x^{r},\bar{x})= \lambda_r+ d(x^{r},\bar{x})< 1+K=\overline{K}.$$
It follows from \eqref{eq:cdelta2026} and (\ref{Limita2}) that $d(x^{r+2},\bar{x})\ \leq d(x^{r+1},\bar{x}) < \overline{K}$. Proceeding in a similar way, it follows that
$$d(x^{s},\bar{x})\leq d(x^{s-1},\bar{x})\leq \cdots \leq d(x^{r+2},\bar{x}) \leq d(x^{r+1},\bar{x}) \le \overline{K}.
$$
Therefore, $d(x^s,\bar{x})\leq \overline{K}$ for all $s \geq k_j$, which completes the proof of the theorem.
\end{proof}
\begin{corollary}\label{boundedS}
Assume that $S$ is nonempty and bounded. Then, either $d(x^k,\tilde{x}) \to \infty$ for some fixed $\tilde{x} \in M$, or $d(x^k, S) \to 0$.
\end{corollary}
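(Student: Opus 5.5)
The plan is a dichotomy on whether $\{x^k\}$ has an accumulation point. Since $M$ is a Hadamard manifold, it is complete and finite dimensional. By Hopf--Rinow, closed bounded sets are compact. Hence $\{x^k\}$ has an accumulation point if and only if some subsequence is bounded. The negation of this is exactly that $d(x^k,\tilde{x})\to\infty$ for one (equivalently, every) fixed $\tilde{x}\in M$.

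\emph{Case 1: no accumulation point.} Fix any $\tilde{x}\in M$. Suppose $d(x^k,\tilde{x})\not\to\infty$. Then there exist $A>0$ and a subsequence with $d(x^{k_j},\tilde{x})\le A$. This subsequence lies in the compact ball $B[\tilde{x},A]$, so it has a convergent further subsequence. That contradicts the absence of accumulation points. Therefore $d(x^k,\tilde{x})\to\infty$, which is the first alternative.

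\emph{Case 2: an accumulation point exists.} Since $S$ is nonempty and bounded, Theorem \ref{thm:cluster} applies and gives that the whole sequence $\{x^k\}$ is bounded. We are under the standing step-size assumptions $\lambda_k\to0$ and $\sum\lambda_k=\infty$. We may also assume the method is infinite; otherwise $x^{k_0}\in S$ and the sequence stops there, so the conclusion is trivial. Theorem \ref{thm:liminf}(ii) then yields $d(x^k,S)\to0$, which is the second alternative.

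The only point needing care is confirming that the hypotheses of the invoked results match. Theorem \ref{thm:cluster} requires exactly ``$S$ nonempty and bounded'' together with an accumulation point. Theorem \ref{thm:liminf} requires only boundedness of $\{x^k\}$, plus the standing assumptions on $\lambda_k$ and non-termination. So no real obstacle remains: the corollary is a direct combination of the compactness dichotomy with these two theorems.
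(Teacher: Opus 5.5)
Your proof is correct and follows the paper's argument. The paper splits on whether $f(x^k)\to+\infty$ or some subsequence stays in a sublevel set $[f\le a]$. Given compactness of sublevel sets (Proposition~\ref{prop:CSS}) and continuity of $f$, this is the same dichotomy as yours (no accumulation point versus an accumulation point), and the paper then concludes exactly as you do via Theorem~\ref{thm:cluster} and Theorem~\ref{thm:liminf}(ii). Your split via Hopf--Rinow is slightly more direct, since it supplies the accumulation-point hypothesis of Theorem~\ref{thm:cluster} without passing through Proposition~\ref{prop:CSS} at this stage.
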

\begin{proof} 
Since $S$ is nonempty and bounded, we have two cases to analyze regarding the sequence $\{f(x^k)\}$:
\begin{itemize}
    \item[(i)] $|f(x^k)| \to \infty$,
    \item[(ii)] There are $a > f^*$ and a subsequence $\{x^{k_j}\}$ such that $f(x^{k_j}) \leq a$.
\end{itemize}

If Case (i) holds, then Proposition \ref{prop:CSS} implies that $d(x^k,\tilde{x}) \to \infty$ for some fixed $\tilde{x} \in M$. Suppose now that Case (ii) holds. By Proposition \ref{prop:CSS}, the subsequence $\{x^{k_j}\}$ is bounded. Then, Theorem \ref{thm:cluster} implies that the sequence $\{x^k\}$ is bounded. Therefore, it follows from  Theorem \ref{thm:liminf}$(ii)$ that $d(x^k,S) \to 0$.
\end{proof}

\begin{remark} \label{Ocoercive} 
If, for some fixed $\tilde{x} \in M$, there exists a constant $A>0$ such that
\begin{equation*}
\lim_{d(x, \tilde{x}) \to \infty} \frac{f(x)}{d(x, \tilde{x})} \geq A,
\end{equation*}
then the solution set $S$ is nonempty and bounded. It then follows from Corollary \ref{boundedS} that either $d(x^k,\tilde{x}) \to \infty$ for some fixed $\tilde{x} \in M$, or $d(x^k, S) \to 0.$
\end{remark}
Although the next two results have appeared previously in the literature \cite{Ferreira03042019,Wang2018}, we include them here for completeness and provide alternative proofs that deepen understanding and strengthen the contribution of the present paper centered on Theorem \ref{thm:key}, which is instrumental to our analysis and has played an essential role in the development and organization of the paper.

\begin{theorem}\label{thm:TeoPolyak}
Let $S \neq \emptyset$ and suppose that \( \displaystyle\sum_{k=1}^{\infty} \lambda_k^2 <\infty \). Then, $x^k \to x^{*} \in S$.
\end{theorem}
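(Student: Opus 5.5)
The plan is to establish a quasi-Fejér property of $\{x^k\}$ with respect to $S$ in terms of $\cosh(\kappa d(\cdot,\cdot))$, deduce boundedness, and then combine Theorem \ref{thm:liminf} with the quasi-Fejér property to obtain convergence of the whole sequence.

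\textbf{Step 1 (quasi-Fejér inequality).} Fix $\bar x\in S$ and set $d_k:=d(x^k,\bar x)$. We work directly with Proposition \ref{prop:comparison} in the geodesic triangle with vertices $\bar x, x^k, x^{k+1}$, whose angle at $x^k$ is $\gamma_k=\measuredangle(s^k,\exp_{x^k}^{-1}\bar x)$. The subgradient inequality \eqref{subineq} with $y=\bar x$ gives $\langle g^k,\exp^{-1}_{x^k}\bar x\rangle\le f^*-f(x^k)\le 0$, so $\cos\gamma_k\ge 0$. Proposition \ref{prop:comparison} then yields
\[
\cosh(\kappa d_{k+1})\le \cosh(\kappa d_k)\cosh(\kappa\lambda_k)\le \cosh(\kappa d_k)\bigl(1+c\,\lambda_k^2\bigr)
\]
for $k$ large, with $c:=\kappa^2$. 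This uses $\cosh t-1\le t^2$ for small $t$, which holds since $\lambda_k\to0$. Because $\prod_k(1+c\lambda_k^2)<\infty$ when $\sum\lambda_k^2<\infty$, the sequence $\{\cosh(\kappa d_k)\}$ is quasi-Fejér in the multiplicative sense. Hence it is bounded and converges, so $\{d(x^k,\bar x)\}$ converges for every $\bar x\in S$. In particular $\{x^k\}$ is bounded.

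\textbf{Step 2 (a cluster point in $S$).} Since $\{x^k\}$ is bounded, Theorem \ref{thm:liminf}(i) gives $\liminf f(x^k)=f^*$. (If the method terminates finitely, the conclusion is trivial, so we may assume it does not.) We extract a subsequence $x^{k_j}\to x^*$ with $f(x^{k_j})\to f^*$. By continuity of $f$, $x^*\in S$.

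\textbf{Step 3 (convergence of the whole sequence).} Apply Step 1 with $\bar x=x^*$: the limit $\lim_k d(x^k,x^*)$ exists, and along the subsequence it equals $0$. Therefore $x^k\to x^*$.

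The main obstacle is Step 1. We must verify that the sign information $\cos\gamma_k\ge0$ can be inserted into the comparison law of cosines, which requires $\sinh(\kappa d_k)\sinh(\kappa\lambda_k)\cos\gamma_k\ge0$. We must also make sure the multiplicative factor is uniform in $k$. The product $\prod(1+c\lambda_k^2)$ handles this cleanly, so we would state a small lemma: if $a_{k+1}\le a_k(1+\epsilon_k)$ with $\sum\epsilon_k<\infty$ and $a_k\ge1$, then $\{a_k\}$ converges. Applying it to $a_k=\cosh(\kappa d_k)$ and noting that $\cosh$ is a homeomorphism on $[0,\infty)$ transfers the convergence to $\{d_k\}$.
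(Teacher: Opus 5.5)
Your proposal is correct and follows essentially the same route as the paper: the multiplicative Fejér-type estimate $\cosh(\kappa d_{k+1})\le\cosh(\kappa d_k)\cosh(\kappa\lambda_k)$, boundedness of $\{x^k\}$ from $\sum\lambda_k^2<\infty$, a cluster point $x^*\in S$ from Theorem~\ref{thm:liminf}, and convergence of the whole sequence by applying the estimate with $\bar x=x^*$. The only difference is how you get the one-step estimate. You derive it directly from Proposition~\ref{prop:comparison} together with $\cos\gamma_k\ge 0$ from the subgradient inequality, which is the same computation the paper uses in the proof of Theorem~\ref{complexity}. The paper instead goes through Theorem~\ref{thm:key} with a $k$-dependent $\delta_k$ and then discards that term, so your version is slightly more direct and does not need $x^k\notin S$ for this step.
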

\begin{proof}

Take $\overline{x} \in S$ and $j \in \mathbb{N}$. Since $ f(x^j) > f(\overline{x})$, it follows from continuity of $f$ that there exists $\delta_j > 0$ such that $f(u) < f(x^j)$ for all $u \in B[\overline{x} ,2\delta_j]$. In particular, $d(x^j, \overline{x}) > 2\delta_j$. Let $d_{j} = d(x^{j}, \overline{x})$ for all $j \in \mathbb{N}$. Applying Theorem \ref{thm:key} with $z=x^{j+1},~ x=x^j$ and $\lambda= \lambda_j$, we obtain
$$
\cosh (\kappa d_{j+1}) \leq  \cosh (\kappa d_j)\cosh (\kappa \lambda_j) - \sinh (\kappa \lambda_j)\sinh \left(\kappa \delta_j/2\right),
$$
Thus,
$$
\cosh(\kappa d_{j+1}) \leq  \cosh(\kappa d_j) \cosh (\kappa \lambda_j).
$$
For $k > j$, the above inequality implies that
$$   \cosh(\kappa d_k) \leq   \cosh(\kappa d_j) \prod_{i=j}^{k-1} \cosh (\kappa \lambda_i).   $$
On the other hand, since $\displaystyle\lim_{t \to 0}\displaystyle\frac{\cosh t -1}{t^2} = 1/2$ and \( \displaystyle\lim_{k \to \infty} \lambda_k = 0 \), there exists $k_0 \in \mathbb{N}$ such that $\cosh (\kappa \lambda_k) <  1 + \kappa^2\lambda_{k}^2$ for all $k > k_0$.
Thus,  for $k > j > k_0$ we have
\begin{eqnarray}\label{eq:ln}
\ln \cosh(\kappa d_k) &\leq& \ln \cosh(\kappa d_j)  + \displaystyle \sum_{i=j}^{k-1} \ln(1 + \kappa^2\lambda_{i}^2) \nonumber\\
&\leq& \ln \cosh(\kappa d_j)  + \kappa^2\displaystyle \sum_{i=j}^{k-1} \lambda_{i}^2\nonumber \\
&\leq& \ln \cosh(\kappa d_j)  + \kappa^2\displaystyle \sum_{i=j}^{\infty} \lambda_{i}^2.
\end{eqnarray}
Since $\sum_{k=1}^{\infty} \lambda_k^2 < \infty$, the sequence $\{x^k\}$ is bounded. By Theorem \ref{thm:liminf}$(ii)$, there exists a subsequence $\{x^{k_j}\}$ of $\{x^{k}\}$ such that $x^{k_j} \to x^{*} \in S$. We may assume without loss of generality that $\overline{x} = x^{*}$. From \eqref{eq:ln}, it follows that for $k > k_j > k_0$ we have
\begin{eqnarray*}
\cosh (\kappa d(x^k,x^{*})) 
& \leq & 
\cosh (\kappa d(x^{k_j},x^{*}))\exp\!\left( \kappa^2\sum_{i=k_j}^{\infty} \lambda_{i}^2\right).
\end{eqnarray*}
Since $\sum_{k=1}^{\infty} \lambda_k^2 < \infty$, letting $j \to \infty$ yields
\[
\sum_{i=k_j}^{\infty} \lambda_i^2 \to 0
\quad \text{and} \quad
d(x^{k_j},x^*) \to 0.
\]
By the continuity and strict monotonicity of the function
$t \mapsto \cosh t$ on $[0,\infty)$, it follows that
\[
d(x^k,x^*) \to 0,
\]
that is, $x^k \to x^* \in S$.
\end{proof}
\begin{theorem}\label{complexity}
Let $S \neq \emptyset$ and suppose that \( \displaystyle\sum_{k=1}^{\infty} \lambda_k^2 <\infty \). Then, there exist constants $A,B>0$ such that
$$
\min\{[f(x^k)-f(x^*)] : k =0,\dots, N\}\leq \frac{ \kappa A\sum_{k=0}^{N}\lambda_k^2 +Bd^2(x^*,x^0)}{\sum_{k=0}^{N}\lambda_k}.
$$
\end{theorem}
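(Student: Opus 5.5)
The plan is to prove the bound with a one-step inequality built from the comparison law of cosines (Proposition \ref{prop:comparison}) and the subgradient inequality \eqref{subineq}, and then to telescope it. Throughout, $x^*\in S$ is the limit point given by Theorem \ref{thm:TeoPolyak}, and we set $d_k:=d(x^k,x^*)$.

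\textbf{Uniform bounds.} By Theorem \ref{thm:TeoPolyak}, $x^k\to x^*$, so the sequence $\{x^k\}$ is bounded. Hence there is $D>0$ with $d_k\le D$ for all $k$. By the boundedness property of subdifferentials on bounded sets stated after \eqref{subineq}, there is $L>0$ with $\|g^k\|\le L$ for all $k$. Since $\lambda_k\to 0$, we may also take $\lambda_k\le \Lambda$ for all $k$.

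\textbf{One-step inequality.} Set $v^k:=\exp_{x^k}^{-1}x^*$, so that $\|v^k\|=d_k$, and let $\theta_k:=\measuredangle(s^k,v^k)$. The subgradient inequality gives
\[
f^*\ge f(x^k)+\langle g^k,v^k\rangle,
\qquad\text{hence}\qquad
d_k\cos\theta_k=\langle s^k,v^k\rangle\ge \frac{f(x^k)-f^*}{\|g^k\|}\ge \frac{f(x^k)-f^*}{L}\ge 0 .
\]
Applying Proposition \ref{prop:comparison} to the triangle with vertices $x^k$, $x^{k+1}$, $x^*$ gives
\[
\cosh(\kappa d_{k+1})\le \cosh(\kappa d_k)\cosh(\kappa\lambda_k)-\sinh(\kappa d_k)\sinh(\kappa\lambda_k)\cos\theta_k .
\]
Because $\cos\theta_k\ge 0$, we can use $\sinh t\ge t$ on both sine factors, which yields
\[
\sinh(\kappa d_k)\sinh(\kappa\lambda_k)\cos\theta_k\ge \kappa^2\lambda_k\, d_k\cos\theta_k\ge \frac{\kappa^2\lambda_k\,(f(x^k)-f^*)}{L}.
\]
For the remaining term, $\cosh t-1\le \tfrac{t^2}{2}\cosh t$ gives
\[
\cosh(\kappa d_k)\bigl(\cosh(\kappa\lambda_k)-1\bigr)\le c\,\kappa^2\lambda_k^2,
\qquad c:=\tfrac12\cosh(\kappa D)\cosh(\kappa\Lambda).
\]
Combining these,
\[
\cosh(\kappa d_{k+1})\le \cosh(\kappa d_k)+c\kappa^2\lambda_k^2-\frac{\kappa^2}{L}\lambda_k\bigl(f(x^k)-f^*\bigr).
\]

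\textbf{Telescoping.} Summing from $k=0$ to $N$ and using $\cosh\ge 1$ gives
\[
\frac{\kappa^2}{L}\sum_{k=0}^N\lambda_k\bigl(f(x^k)-f^*\bigr)\le \cosh(\kappa d_0)-1+c\kappa^2\sum_{k=0}^N\lambda_k^2 .
\]
We then bound $\cosh(\kappa d_0)-1\le \tfrac{\kappa^2}{2}\cosh(\kappa d_0)\,d_0^2$, bound the left-hand side below by the minimum of $f(x^k)-f^*$ times $\frac{\kappa^2}{L}\sum_{k=0}^N\lambda_k$, and divide through. This gives the statement with
\[
B:=\frac{L\cosh(\kappa d_0)}{2},\qquad A:=\frac{Lc}{\kappa},
\]
so that $L c\sum\lambda_k^2=\kappa A\sum\lambda_k^2$, with $c=\tfrac12\cosh(\kappa D)\cosh(\kappa\Lambda)$ as above.

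\textbf{Main obstacle.} The delicate point is controlling the term $\cosh(\kappa d_k)(\cosh(\kappa\lambda_k)-1)$ uniformly in $k$. Without a uniform bound on $d_k$, this term does not reduce to $O(\lambda_k^2)$; this is exactly where the square-summability hypothesis enters, through the boundedness supplied by Theorem \ref{thm:TeoPolyak}, or directly through \eqref{eq:ln}. A second point to watch is the sign of $\cos\theta_k$, which is needed to apply $\sinh t\ge t$ to $\sinh(\kappa d_k)$. It is nonnegative because $x^k\notin S$ implies $f(x^k)>f^*$, and the case $x^k\in S$ is trivial.
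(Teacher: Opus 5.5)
Your proof is correct. Its first half matches the paper's: both apply Proposition \ref{prop:comparison} to the triangle $x^k,x^{k+1},x^*$, and both use the subgradient inequality to get $d_k\cos\theta_k\ge (f(x^k)-f^*)/\|g^k\|\ge 0$, with boundedness of $\{x^k\}$ and $\{g^k\}$ coming from Theorem \ref{thm:TeoPolyak}. The two routes split at the telescoping step.

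The paper turns the $\cosh$-inequality into a squared-distance inequality using \cite[Lemma 3.1]{wang2015linear}. That lemma is the tangent-line inequality $\cosh(\kappa d_{k+1})-\cosh(\kappa d_k)\ge \frac{\kappa\sinh(\kappa d_k)}{2d_k}(d_{k+1}^2-d_k^2)$ for the convex function $t\mapsto\cosh(\kappa\sqrt t)$. The paper then bounds $\kappa d_k/\tanh(\kappa d_k)\le 1+\kappa d_k$ and $\sinh(\kappa\lambda_k)/(\kappa\lambda_k)$, and telescopes $d_k^2$.

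You instead telescope the potential $\cosh(\kappa d_k)$ directly. You lower-bound both $\sinh$ factors by their arguments, which is legitimate because $\cos\theta_k\ge0$. You use $\cosh t-1\le \tfrac{t^2}{2}\cosh t$ twice: once for the step term and once at the end to turn $\cosh(\kappa d_0)-1$ into a multiple of $d_0^2$.

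Your route is shorter and self-contained, needs no external lemma, and gives explicit constants $B=\tfrac{L}{2}\cosh(\kappa d_0)$ and $A=\tfrac{L}{2\kappa}\cosh(\kappa D)\cosh(\kappa\Lambda)$. The paper's division by $\frac{\kappa\sinh(\kappa d_k)}{2d_k}$ buys normalization of the exponential factor. The coefficient of $\lambda_k^2$ becomes $\kappa d_k/\tanh(\kappa d_k)$, so its constants grow only linearly in the distance bound. Its $B$ depends only on the subgradient bound, which gives a genuinely quadratic dependence on $d(x^0,x^*)$. Your constants instead carry $\cosh(\kappa D)$ and $\cosh(\kappa d_0)$, which are exponential in the distances. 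Since the theorem only asserts the existence of constants $A,B>0$ independent of $N$, both arguments prove it.
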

\begin{proof}
From Theorem \ref{thm:TeoPolyak}, we have $x^k \to x^* \in S$. Now, from Proposition \ref{prop:comparison}, it follows that
\begin{equation}\label{eq:C1}
\cosh (\kappa d_{k+1}) \leq \cosh (\kappa d_k)\cosh (\kappa \lambda_k) - \sinh (\kappa d_k)\sinh (\kappa \lambda_k) \cos \gamma_k,
\end{equation}
where $d_{k+1}=d(x^{k+1},{x}^*)$, $d_{k}=d(x^{k},{x}^*)$, $\lambda_k=d(x^{k+1}, x^k)$, $v^k=\exp_{x^k}^{-1}(x^*)$, and $
\cos \gamma_k=\displaystyle\frac{\langle s^k, v^k \rangle }{d_k}
$. From the convexity of $f$, we have $f(x^*)\geq f(x^k)+\langle g^k, v^k\rangle$. Hence,
\begin{equation}\label{eq:C2}
0\leq \frac{f(x^k)-f(x^*)}{d_k||g^k||}\leq \frac{\langle s^k, v^k \rangle }{d_k}=\cos \gamma_k.
\end{equation} 
Combining \eqref{eq:C1}, \eqref{eq:C2}, and the fact that $\displaystyle \cosh{t}-1\leq \frac{t}{2}\sinh{t}$ for all $t\geq 0$, we get 
$$
\cosh (\kappa d_{k+1})- \cosh (\kappa d_k) \leq \kappa\cosh (\kappa d_k)\sinh (\kappa \lambda_k) \nonumber \\
 \times \bigg[ \frac{\lambda_k }{2} -\frac{\tanh(\kappa d_k)}{(\kappa d_k)} \frac{[f(x^k)-f(x^*)]}{||g^k||}\bigg]. 
$$
From \cite[Lemma 3.1]{wang2015linear}, we have
\begin{equation}\label{eq:C4}
\frac{\kappa \sinh (\kappa d_k)\left(d_{k+1}^2- d_k^2\right)}{2d_k}\leq [\cosh (\kappa d_{k+1})- \cosh (\kappa d_k)].  
\end{equation}
From \eqref{eq:C4}, and the above inequality, we have that for all $k \in \mathbb{N}$,
\begin{equation}\label{eq:C5}
d_{k+1}^2- d_k^2 \leq \kappa\frac{\sinh (\kappa \lambda_k)}{(\kappa \lambda_k)} \bigg[ \lambda_k^2 \frac{(\kappa d_k)}{\tanh(\kappa d_k)} - 2\lambda_k\frac{[f(x^k)-f(x^*)]}{||g^k||}\bigg]. 
\end{equation}
On the other hand, note that the inequality $ \frac{t}{\tanh{t}} \leq 1 + t$ holds for all \( t \geq 0 \); thus, $\frac{(\kappa d_k)}{\tanh(\kappa d_k)} \leq 1+\kappa d_k$ for all $k$. The function \(t \mapsto \frac{\sinh{t}}{t} \), defined on \( (0, \infty) \), is increasing and bounded below by 1. Since $\lambda_k \to 0$, we obtain $ \frac{\sinh (\kappa \lambda_k)}{(\kappa \lambda_k)} \leq C_1$. Now, since \( \{x^k\} \) is bounded, we have $d_k=d(x^k, x^{*})< C_2$ and $||g^k||<C_3$ for all $k$. Hence, from \eqref{eq:C5} and upon renaming the constants, we deduce that for every \( k \in \mathbb{N} \), 
$$
\lambda_k[f(x^k)-f(x^*)] \leq \kappa A \lambda_k^2 +B(d_k^2- d_{k+1}^2).
$$
Summing the above inequality from $k = 0$ to $N$, we obtain
$$
\sum_{k=0}^{N}\lambda_k[f(x^k)-f(x^*)] \leq \kappa A\sum_{k=0}^{N}\lambda_k^2 +B(d_{0}^2-d_{N+1}^2).
$$
Therefore,
\begin{equation*}\min\{[f(x^k)-f(x^*)] : k =0,\dots, N\}\leq \frac{ \kappa A\sum_{k=0}^{N}\lambda_k^2 +Bd^2(x^0,x^*)}{\sum_{k=0}^{N}\lambda_k}.
\end{equation*}
\begin{equation*}
\end{equation*}
\end{proof}
\section{Conclusions}\label{sec:conclusions}
Our main contribution is a Riemannian counterpart of Shepilov's Euclidean convergence analysis for the subgradient method on manifolds with lower bounded curvature. It complements recent advances in Riemannian convex optimization and, in particular, provides a general tool that illustrates the underlying contraction behavior in the hyperbolic setting which will be useful for the future development of other first-order methods on manifolds.
The explicit example constructed in the Poincar\'e disk model (see Section \ref{sec:preli}) has an unbounded solution set $S$. More precisely, $S:=\arg\min f$ coincides with the points of the disk on the $x$-axis. In this example, if we apply the subgradient method starting from an initial point $x^0=q^0 i$ on the $y$-axis then
\[
x^{k+1}=\exp_{x^{k}}\bigl(\lambda_{k}s^{k}\bigr)=q^{k+1}i
\]
remains on the $y$-axis for all $k\in\mathbb{N}$. The sequence $\{x^k\}$ is bounded because $-\operatorname{grad}f(x^k)$ always points toward the origin, 
\[
d(x^{k+1},0)\le\max\{\lambda_k,d(x^k,0)\},
\]
and $\displaystyle\lim_{k\to\infty}\lambda_k=0$. By Theorem \ref{thm:liminf} we have $d(x^k,S)\to0$. Since $d(x^k,S)=d(x^k,0)$, it follows that $x^k\to0$. Thus, we obtain a situation in which the solution set $S$ is unbounded, yet the sequence $\{x^k\}$ remains bounded and converges entirely to an element of $S$. Although the objective is smooth, the example indicates directions for further theoretical refinement, even in the smooth setting.
This construction suggests a broader geometric question. In the
comparison arguments used throughout this paper, the constant $\kappa$ enters only through the geodesic triangles generated by the subgradient step
\[
        x^{k+1}=\exp_{x^k}(\lambda_k s^k),
        \qquad
        s^k=-\frac{g^k}{\|g^k\|}.
\]
Thus, instead of assuming a uniform lower bound
\[
        \sec M\geq -\kappa^2,
\]
one may ask whether the convergence analysis can be extended to Hadamard
manifolds whose curvature is not uniformly bounded below, but whose negative part grows at most linearly with the distance from a fixed reference point $\bar x\in M$.

More precisely, a natural condition is the existence of constants
$A,B\geq 0$ such that, for every $p\in M$ and every two-dimensional
subspace $\sigma\subset T_pM$,
\begin{equation}\tag{LC}\label{eq:LC}
K_p(\sigma)\geq -\bigl(A+B\,d(p,\bar x)\bigr).
\end{equation}

The case $B=0$ recovers the constant lower curvature bound considered in the
present work, with $\kappa=\sqrt{A}$. If a geodesic triangle associated with a
subgradient step is contained in a ball $B[\bar x,R]$, then condition
\eqref{eq:LC} implies the local estimate
\[
K_p(\sigma)\geq -(A+BR),
\]
in that region. Consequently, the same hyperbolic comparison estimates may be
applied locally with the effective parameter
\[
        \kappa_R:=\sqrt{A+BR}.
\]
This observation suggests a possible extension of the present analysis. The main difficulty is that the effective comparison parameter, \(\kappa_R\), depends on the size of the region visited by the iterates. Therefore, one would need a bootstrap argument showing that the curvature contribution generated along the orbit remains controlled before boundedness of the sequence is known. Such an approach would extend the current theory from manifolds with uniformly bounded below curvature to a class of Hadamard manifolds with linearly decaying lower curvature bounds, and will be investigated in future work.

\begin{acknowledgement}
The research of Glaydston de C. Bento, J. Xavier da Cruz Neto, and Jurandir de O. Lopes was partially supported by the Conselho Nacional de Desenvolvimento Científico e Tecnológico (CNPq) under grants 314106/2020-0, 302156/2022-4, and 305415/2025-5, respectively.
\end{acknowledgement}

\end{document}